\documentclass{amsart}

\usepackage{microtype}
\usepackage[margin=1in]{geometry}

\usepackage{amsmath}
\usepackage{amssymb}
\usepackage{amsthm}

\usepackage{dna-diagrams}
\usepackage{float}
\usepackage[colorlinks=true,linkcolor=blue,citecolor=blue,urlcolor=blue,pdfencoding=auto,psdextra]{hyperref}

\hypersetup{
  bookmarksnumbered,
  pdftitle  = {A Rigid Category of DNA Secondary Structures},
  pdfauthor = {Andr\'es Ortiz-Mu\~noz},
}

\newtheorem{theorem}{Theorem}[section]
\newtheorem{proposition}[theorem]{Proposition}

\theoremstyle{definition}
\newtheorem{definition}[theorem]{Definition}
\newtheorem{example}[theorem]{Example}
\theoremstyle{remark}
\newtheorem{remark}[theorem]{Remark}

\newcommand{\Sig}{\Sigma}
\newcommand{\eps}{\varepsilon}
\newcommand{\comp}[1]{\overline{#1}}
\newcommand{\dv}{{}^{\vee}}
\newcommand{\Hom}{\mathrm{Hom}}
\newcommand{\id}{\mathrm{id}}
\newcommand{\ev}{\mathrm{ev}}
\newcommand{\coev}{\mathrm{coev}}
\newcommand{\Ddna}{\mathcal{D}_{\mathrm{DNA}}}

\title{A Rigid Category of DNA Secondary Structures}
\author{Andr\'es Ortiz-Mu\~noz}
\thanks{Harvard Medical School, Boston, MA, USA. Email: \href{mailto:andortiz19@gmail.com}{andortiz19@gmail.com}.}
\date{}

\begin{document}

\begin{abstract}
We construct a strict pivotal monoidal category $\Ddna$ whose objects are DNA sequences (words over $\{A,C,G,T\}$) and whose morphisms are isotopy classes of typed noncrossing planar matchings---composed of through-strands and Watson--Crick-typed arcs---in a rectangle with source and target boundaries. The dual of a sequence is its reverse complement, evaluation and coevaluation are canonical duplex pairings, and the snake identities hold by planar isotopy. A bending correspondence identifies each morphism $x \to y$ with a secondary structure on the combined word $x\dv y$; in particular, the generalized elements $\eps \to w$ are exactly the non-pseudoknotted secondary structures on~$w$. Composition, viewed in this straightened picture, is computed by a \emph{zip-and-transfer} operation on complementary interfaces---a combinatorial rearrangement of base-pair connectivity of which toehold-mediated strand displacement is a kinetically specific instance. Because $\Ddna$ is rigid monoidal, it shares the categorical backbone of pregroup grammars and the DisCoCat framework for compositional semantics: a strong monoidal functor from a grammatical category to $\Ddna$ maps grammatical reductions to Watson--Crick base pairing and sentence meanings to secondary structures. We describe this functor and discuss connections to algorithmic self-assembly, composable strand-displacement circuits, and constructive dynamical systems.
\end{abstract}

\maketitle

\section{Introduction}

DNA nanotechnology and molecular computation rest on the predictable interactions governed by Watson--Crick base pairing. In this paper we construct a category $\Ddna$ (for DNA diagrams) whose objects are words over the nucleotide alphabet $\{A,C,G,T\}$ and whose morphisms are isotopy classes of typed noncrossing arc diagrams (non-pseudoknotted secondary structures). The dual of a sequence is its reverse complement, evaluation and coevaluation are canonical duplex pairings, and the snake identities hold by planar isotopy, making $\Ddna$ a strict pivotal monoidal category. Composition, in the straightened picture, is computed by a \emph{zip-and-transfer} operation on complementary interfaces---a general mechanism of which toehold-mediated strand displacement~\cite{Yurke2000, Zhang2009, Qian2011} is a kinetically specific instance.

The categorical structure underlying $\Ddna$---a rigid monoidal category---is the same structure that Lambek~\cite{Lambek1999} identified in natural language syntax through pregroup grammars, and that Coecke, Sadrzadeh, and Clark~\cite{Coecke2010} used to build compositional distributional semantics (DisCoCat). This structural coincidence yields a monoidal functor from grammar to DNA in which grammatical reduction maps to Watson--Crick base pairing; we treat this as one application of the rigid structure rather than its sole motivation.

Section~\ref{sec:dna} recalls the biology; Section~\ref{sec:rigid} reviews rigid monoidal categories; Section~\ref{sec:category} constructs $\Ddna$ and its pivotal structure; Section~\ref{sec:grammar} describes the functor from pregroup grammars to $\Ddna$; and Section~\ref{sec:remarks} discusses variants, connections, and open questions.

\section{DNA secondary structures}\label{sec:dna}

Deoxyribonucleic acid (DNA) is a polymer whose monomers, called nucleotides or bases, are drawn from the four-letter alphabet
\[
\Sig = \{A, C, G, T\}.
\]
A single-stranded DNA molecule is a finite sequence of bases, read by convention from the $5'$ to the $3'$ end. We write such a sequence as a word $w = b_1 b_2 \cdots b_n$ in the free monoid $\Sig^*$, with the empty word $\eps$ representing the absence of any bases.

The defining chemical feature of DNA is Watson--Crick complementarity: adenine ($A$) pairs with thymine ($T$), and cytosine ($C$) pairs with guanine ($G$). We encode this as an involution on the alphabet,
\[
\comp{A} = T, \quad \comp{T} = A, \quad \comp{C} = G, \quad \comp{G} = C,
\]
which extends to words by reversal and letterwise complementation. The \emph{reverse complement} of $w = b_1 b_2 \cdots b_n$ is
\[
w\dv = \comp{b_n}\, \comp{b_{n-1}} \cdots \comp{b_1}.
\]
Reversal reflects the antiparallel orientation of the two strands in a DNA duplex: a strand running $5' \to 3'$ pairs with its complement running $3' \to 5'$. When $w$ and $w\dv$ are brought together, each base in $w$ can pair with the corresponding base in $w\dv$, forming a double-helical duplex.

A \emph{secondary structure} on a sequence $w = b_1 \cdots b_n$ is a set of base pairs---pairs of positions $(i,j)$ with $i < j$ such that $b_i$ and $b_j$ are Watson--Crick complements---subject to two constraints:
\begin{enumerate}
  \item \textbf{Uniqueness.} Each position participates in at most one base pair.
  \item \textbf{No crossings.} If $(i,j)$ and $(k,l)$ are both base pairs with $i < k$, then either $j < k$ (the pairs are disjoint) or $i < k < l < j$ (one pair is nested inside the other). The forbidden pattern is $i < k < j < l$, which would represent a \emph{pseudoknot}\footnote{The non-pseudoknotted (planar) model is a widely used first approximation for DNA secondary structure and is often sufficient for stochastic modeling of combinatorial DNA dynamics over folded-state energy landscapes. For sufficiently long sequences, however, pseudoknots do occur; categorically, this suggests braided (rather than purely planar pivotal) extensions as a more general framework. Despite this limitation, non-pseudoknotted models have shown strong predictive value in many experimental regimes~\cite{Waterman1978,Yurke2000,Zhang2009}. In this paper we focus on developing the underlying pivotal structure under the standard non-pseudoknotted assumption.}.
\end{enumerate}

The standard representation is an \emph{arc diagram}: the bases $b_1, \ldots, b_n$ are arranged along a horizontal line, and each base pair $(i,j)$ is drawn as an arc in the upper half-plane connecting positions $i$ and $j$. The no-crossing condition means that arcs do not intersect.
The diagram below shows a simple hairpin with five nested base pairs closing a three-base loop, shown in both arc and folded views.

\begin{center}
\begin{minipage}[t]{0.48\textwidth}
\centering
\begin{tikzpicture}[x=0.56cm, y=0.56cm]
  \node[base node] (t0) at (1.00,0.00) {A};
  \node[base node] (t1) at (2.00,0.00) {C};
  \node[base node] (t2) at (3.00,0.00) {G};
  \node[base node] (t3) at (4.00,0.00) {T};
  \node[base node] (t4) at (5.00,0.00) {A};
  \node[base node] (t5) at (6.00,0.00) {G};
  \node[base node] (t6) at (7.00,0.00) {G};
  \node[base node] (t7) at (8.00,0.00) {G};
  \node[base node] (t8) at (9.00,0.00) {T};
  \node[base node] (t9) at (10.00,0.00) {A};
  \node[base node] (t10) at (11.00,0.00) {C};
  \node[base node] (t11) at (12.00,0.00) {G};
  \node[base node] (t12) at (13.00,0.00) {T};
  \draw[at arc rev] (t0.north) .. controls +(0,3.15) and +(0,3.15) .. (t12.north);
  \draw[cg arc rev] (t1.north) .. controls +(0,2.50) and +(0,2.50) .. (t11.north);
  \draw[cg arc] (t2.north) .. controls +(0,1.85) and +(0,1.85) .. (t10.north);
  \draw[at arc] (t3.north) .. controls +(0,1.20) and +(0,1.20) .. (t9.north);
  \draw[at arc rev] (t4.north) .. controls +(0,0.55) and +(0,0.55) .. (t8.north);
\end{tikzpicture}

\smallskip
\footnotesize (a) Arc view
\end{minipage}\hfill
\begin{minipage}[t]{0.48\textwidth}
\centering
\begin{tikzpicture}[x=0.54cm, y=0.54cm]
  \node[base node, font=\tiny\ttfamily] (b0) at (-0.800,-0.000) {A};
  \node[base node, font=\tiny\ttfamily] (b1) at (-0.800,-0.650) {C};
  \node[base node, font=\tiny\ttfamily] (b2) at (-0.800,-1.300) {G};
  \node[base node, font=\tiny\ttfamily] (b3) at (-0.800,-1.950) {T};
  \node[base node, font=\tiny\ttfamily] (b4) at (-0.800,-2.600) {A};
  \node[base node, font=\tiny\ttfamily] (b5) at (-0.594,-3.217) {G};
  \node[base node, font=\tiny\ttfamily] (b6) at (0.000,-3.480) {G};
  \node[base node, font=\tiny\ttfamily] (b7) at (0.594,-3.217) {G};
  \node[base node, font=\tiny\ttfamily] (b8) at (0.800,-2.600) {T};
  \node[base node, font=\tiny\ttfamily] (b9) at (0.800,-1.950) {A};
  \node[base node, font=\tiny\ttfamily] (b10) at (0.800,-1.300) {C};
  \node[base node, font=\tiny\ttfamily] (b11) at (0.800,-0.650) {G};
  \node[base node, font=\tiny\ttfamily] (b12) at (0.800,0.000) {T};
  \draw[-{Stealth[length=1.6pt,width=1.2pt]}, dnaWire, thin] (b0) -- (b1);
  \draw[-{Stealth[length=1.6pt,width=1.2pt]}, dnaWire, thin] (b1) -- (b2);
  \draw[-{Stealth[length=1.6pt,width=1.2pt]}, dnaWire, thin] (b2) -- (b3);
  \draw[-{Stealth[length=1.6pt,width=1.2pt]}, dnaWire, thin] (b3) -- (b4);
  \draw[-{Stealth[length=1.6pt,width=1.2pt]}, dnaWire, thin] (b4) -- (b5);
  \draw[-{Stealth[length=1.6pt,width=1.2pt]}, dnaWire, thin] (b5) -- (b6);
  \draw[-{Stealth[length=1.6pt,width=1.2pt]}, dnaWire, thin] (b6) -- (b7);
  \draw[-{Stealth[length=1.6pt,width=1.2pt]}, dnaWire, thin] (b7) -- (b8);
  \draw[-{Stealth[length=1.6pt,width=1.2pt]}, dnaWire, thin] (b8) -- (b9);
  \draw[-{Stealth[length=1.6pt,width=1.2pt]}, dnaWire, thin] (b9) -- (b10);
  \draw[-{Stealth[length=1.6pt,width=1.2pt]}, dnaWire, thin] (b10) -- (b11);
  \draw[-{Stealth[length=1.6pt,width=1.2pt]}, dnaWire, thin] (b11) -- (b12);
  \draw[dnaAT, thick] (b0) -- (b12);
  \draw[dnaCG, thick] (b1) -- (b11);
  \draw[dnaCG, thick] (b2) -- (b10);
  \draw[dnaAT, thick] (b3) -- (b9);
  \draw[dnaAT, thick] (b4) -- (b8);
\end{tikzpicture}

\smallskip
\footnotesize (b) Folded view
\end{minipage}
\end{center}

These structures are fundamental objects in molecular biology. They determine the geometry and function of single-stranded nucleic acids and have been studied combinatorially since the work of Waterman~\cite{Waterman1978}. Their enumeration is closely related to Motzkin numbers and noncrossing partitions. We will see that they also have a natural categorical interpretation.


\section{Rigid monoidal categories}\label{sec:rigid}

We recall the categorical notions used in the DNA construction. Readers familiar with rigid categories may skip to Section~\ref{sec:category}; we include definitions to fix notation and follow Selinger's conventions~\cite{Selinger2011}.

A \emph{monoidal category} $(\mathcal{C}, \otimes, I)$ consists of a category $\mathcal{C}$ with a bifunctor $\otimes \colon \mathcal{C} \times \mathcal{C} \to \mathcal{C}$ (tensor product), a unit object $I$, and associativity/unitality isomorphisms satisfying coherence conditions. It is \emph{strict} when these isomorphisms are identities; by Mac Lane coherence, every monoidal category is monoidally equivalent to a strict one, and we work strictly throughout.

\begin{definition}\label{def:rigid}
A strict monoidal category $(\mathcal{C}, \otimes, I)$ is \emph{right rigid} if every object $A$ is equipped with a \emph{right dual} $A^*$, together with morphisms
\[
\ev_A \colon A \otimes A^* \to I, \qquad \coev_A \colon I \to A^* \otimes A,
\]
called \emph{evaluation} (or \emph{cup}) and \emph{coevaluation} (or \emph{cap})\footnote{Different texts adopt different orientation and reading conventions for string diagrams (for example, bottom-to-top or left-to-right). Accordingly, the labels ``cup'' and ``cap'' are convention-dependent and may be interchanged in other references.}. We read string diagrams from top to bottom throughout this paper. With this convention, these maps satisfy the \emph{snake identities}:
\[
(\ev_A \otimes \id_A)\circ(\id_A \otimes \coev_A) = \id_A,
\]
\[
(\id_{A^*} \otimes \ev_A)\circ(\coev_A \otimes \id_{A^*}) = \id_{A^*}.
\]
It is \emph{left rigid} if analogous data exist on the other side, and \emph{rigid} (or \emph{autonomous}) if it is both left and right rigid.
\end{definition}

The snake identities have a vivid graphical interpretation in the language of string diagrams~\cite{Joyal1991}. Objects are drawn as labeled wires, morphisms as boxes or nodes, and the tensor product as horizontal juxtaposition. The unit object $I$ is represented by the empty diagram. In our pictures, the arrow direction on each wire encodes the duality structure: reversing orientation corresponds to passing between an object and its dual. In this notation, evaluation is a cup (a wire bending up from $A$ and $A^*$ to nothing) and coevaluation is a cap (a wire bending down from nothing to $A^*$ and $A$). The snake identities then assert that a zig-zag in a wire can be straightened:

\begin{center}
\begin{tikzpicture}[x=1cm, y=1cm]
  \draw[rigid wire plain] (0,2) -- (0,1.2);
  \draw[rigid cup] (0,1.2) .. controls +(0,-0.8) and +(0,-0.8) .. (1,1.2);
  \draw[rigid cap] (1,1.2) .. controls +(0,0.8) and +(0,0.8) .. (2,1.2);
  \draw[rigid wire plain] (2,1.2) -- (2,0);
  \node[rigid node, above] at (0,2) {$A$};
  \node[rigid node, below] at (2,0) {$A$};
  \node[rigid node] at (1,0.55) {\footnotesize$A^*$};
  \node[rigid eq] at (3,1) {$=$};
  \draw[rigid wire] (4,2) -- (4,0);
  \node[rigid node, above] at (4,2) {$A$};
  \node[rigid node, below] at (4,0) {$A$};
\end{tikzpicture}
\qquad\qquad
\begin{tikzpicture}[x=1cm, y=1cm]
  \draw[rigid wire plain] (0,0) -- (0,0.8);
  \draw[rigid cap] (0,0.8) .. controls +(0,0.8) and +(0,0.8) .. (1,0.8);
  \draw[rigid cup] (1,0.8) .. controls +(0,-0.8) and +(0,-0.8) .. (2,0.8);
  \draw[rigid wire plain] (2,0.8) -- (2,2);
  \node[rigid node, below] at (0,0) {$A^*$};
  \node[rigid node, above] at (2,2) {$A^*$};
  \node[rigid node] at (1,1.45) {\footnotesize$A$};
  \node[rigid eq] at (3,1) {$=$};
  \draw[rigid wire] (4,0) -- (4,2);
  \node[rigid node, above] at (4,2) {$A^*$};
  \node[rigid node, below] at (4,0) {$A^*$};
\end{tikzpicture}
\end{center}

\begin{definition}
A rigid monoidal category is \emph{pivotal} if it is equipped with a monoidal natural isomorphism $A \xrightarrow{\sim} A^{**}$ for every object $A$. It is \emph{strictly pivotal} if this isomorphism is the identity, so that $A^{**} = A$.
\end{definition}

In a pivotal category, left and right duals coincide (up to the pivotal isomorphism), and the graphical calculus acquires full rotational invariance within the plane: diagrams can be bent and turned without changing the morphism they represent. This is the diagrammatic setting natural for planar arc diagrams.

\section{The category \texorpdfstring{$\Ddna$ (DNA diagrams)}{D-DNA (DNA diagrams)}}\label{sec:category}

We now construct the category. The reader should keep in mind two pictures simultaneously: the algebraic data (objects, morphisms, composition) and their diagrammatic representations as typed arc diagrams.

\subsection{Objects}

The objects of $\Ddna$ are the words in the free monoid $\Sig^*$. The tensor product on objects is concatenation:
\[
x \otimes y := xy,
\]
and the monoidal unit is the empty word $\eps$. This is strictly associative and unital.

\subsection{Duality}

The dual of a word $w = b_1 \cdots b_n$ is its reverse complement:
\[
w\dv = \comp{b_n}\, \comp{b_{n-1}} \cdots \comp{b_1}.
\]
Three properties are immediate from the definitions and are worth recording:
\begin{enumerate}
  \item $\eps\dv = \eps$.
  \item $(uv)\dv = v\dv\, u\dv$ for all words $u, v$.
  \item $(w\dv)\dv = w$ for every word $w$.
\end{enumerate}
Property~(3) implies that the double dual is the identity on objects.

\subsection{Morphisms}\label{subsec:morphisms}

A morphism $f \colon x \to y$ in $\Ddna$ is an isotopy class of typed noncrossing planar partial matchings, drawn in a rectangle with the bases of $x$ arranged along the top boundary (the source) and the bases of $y$ arranged along the bottom boundary (the target). The matching consists of three components:

\begin{enumerate}
  \item \textbf{Through-wires}: wires connecting a position of $x$ to a position of $y$, representing a base that passes from source to target.
  \item \textbf{Source arcs}: noncrossing arcs among positions of $x$, representing internal base pairings on the source side.
  \item \textbf{Target arcs}: noncrossing arcs among positions of $y$, representing internal base pairings on the target side.
\end{enumerate}

\begin{center}
\begin{tikzpicture}[x=0.62cm, y=0.62cm]
  \node[rigid node, above right] at (0.50,3.40) {\small $f\colon x\to y$};
  \node[base node] (s0) at (3.00,3.00) {A};
  \node[base node] (s1) at (4.00,3.00) {C};
  \node[base node] (s2) at (5.00,3.00) {G};
  \node[base node] (s3) at (6.00,3.00) {T};
  \node[base node] (s4) at (7.00,3.00) {A};
  \node[base node] (s5) at (8.00,3.00) {C};
  \node[base node] (s6) at (9.00,3.00) {G};
  \node[base node] (s7) at (10.00,3.00) {C};
  \node[base node] (s8) at (11.00,3.00) {G};
  \node[base node] (s9) at (12.00,3.00) {T};
  \node[rigid node, left] at (2.40,3.00) {\footnotesize$x$};
  \node[base node] (t0) at (1.00,0.00) {A};
  \node[base node] (t1) at (2.00,0.00) {C};
  \node[base node] (t2) at (3.00,0.00) {G};
  \node[base node] (t3) at (4.00,0.00) {T};
  \node[base node] (t4) at (5.00,0.00) {A};
  \node[base node] (t5) at (6.00,0.00) {C};
  \node[base node] (t6) at (7.00,0.00) {G};
  \node[base node] (t7) at (8.00,0.00) {A};
  \node[base node] (t8) at (9.00,0.00) {C};
  \node[base node] (t9) at (10.00,0.00) {A};
  \node[base node] (t10) at (11.00,0.00) {G};
  \node[base node] (t11) at (12.00,0.00) {T};
  \node[base node] (t12) at (13.00,0.00) {G};
  \node[base node] (t13) at (14.00,0.00) {T};
  \node[rigid node, left] at (0.40,0.00) {\footnotesize$y$};
  \draw[cg arc] (s5.south) .. controls +(0,-1.20) and +(0,-1.20) .. (s8.south);
  \draw[cg arc rev] (s6.south) .. controls +(0,-0.55) and +(0,-0.55) .. (s7.south);
  \draw[at arc rev] (t7.north) .. controls +(0,1.85) and +(0,1.85) .. (t13.north);
  \draw[cg arc rev] (t8.north) .. controls +(0,1.20) and +(0,1.20) .. (t12.north);
  \draw[at arc rev] (t9.north) .. controls +(0,0.55) and +(0,0.55) .. (t11.north);
  \draw[at wire] (s0.south) .. controls (3.00,2.16) and (1.00,0.84) .. (t0.north);
  \draw[cg wire] (s1.south) .. controls (4.00,2.16) and (2.00,0.84) .. (t1.north);
  \draw[cg wire rev] (s2.south) .. controls (5.00,2.16) and (3.00,0.84) .. (t2.north);
  \draw[at wire rev] (s3.south) .. controls (6.00,2.16) and (4.00,0.84) .. (t3.north);
\end{tikzpicture}
\end{center}

The global constraints are:
\begin{itemize}
  \item \emph{Degree}: every position participates in at most one edge or arc.
  \item \emph{Planarity}: the union of all strands and arcs is noncrossing in the rectangle.
  \item \emph{Pairing and wire conventions}: every arc connects Watson--Crick complementary bases, while through-strands are identity wires (the same base on source and target). We draw A--T pairs in red and C--G pairs in blue; A and C wires go down, while T and G wires go up.
\end{itemize}

Two such diagrams represent the same morphism if they are related by planar isotopy within the rectangle, keeping the boundary points fixed.


\subsection{Generalized elements: secondary structures as terms}

A morphism $s \colon \eps \to w$ is a generalized element of the object $w$: it is a diagram with no source boundary and the bases of $w$ on the target boundary. Concretely, it consists entirely of target arcs---that is, it is a noncrossing arc diagram on the positions of $w$ with complementary pairings. This is exactly a non-pseudoknotted secondary structure on $w$.\footnote{From the internal type-theoretic viewpoint of categorical logic, objects are types (here: DNA words), morphisms are terms/proofs/programs (here: DNA diagrams), and maps $\eps \to w$ are closed terms of type $w$, i.e., generalized elements of $w$.}

\begin{center}
\begin{minipage}[t]{0.48\textwidth}
\centering
\begin{tikzpicture}[x=0.62cm, y=0.62cm]
  \node[base node] (t0) at (1.00,0.00) {A};
  \node[base node] (t1) at (2.00,0.00) {C};
  \node[base node] (t2) at (3.00,0.00) {G};
  \node[base node] (t3) at (4.00,0.00) {T};
  \node[base node] (t4) at (5.00,0.00) {A};
  \node[base node] (t5) at (6.00,0.00) {C};
  \node[base node] (t6) at (7.00,0.00) {A};
  \node[base node] (t7) at (8.00,0.00) {C};
  \node[base node] (t8) at (9.00,0.00) {G};
  \node[base node] (t9) at (10.00,0.00) {T};
  \node[base node] (t10) at (11.00,0.00) {G};
  \node[base node] (t11) at (12.00,0.00) {T};
  \draw[at arc rev] (t0.north) .. controls +(0,2.50) and +(0,2.50) .. (t11.north);
  \draw[cg arc rev] (t1.north) .. controls +(0,1.85) and +(0,1.85) .. (t10.north);
  \draw[cg arc] (t2.north) .. controls +(0,1.20) and +(0,1.20) .. (t7.north);
  \draw[at arc] (t3.north) .. controls +(0,0.55) and +(0,0.55) .. (t6.north);
  \node[rigid node] at (6.5,-0.7) {$\eps \to w$};
\end{tikzpicture}

\smallskip
\footnotesize (a) Arc view
\end{minipage}\hfill
\begin{minipage}[t]{0.48\textwidth}
\centering
\resizebox{0.35\textwidth}{!}{
\begin{tikzpicture}[x=0.58cm, y=0.58cm]
  \node[base node, font=\tiny\ttfamily] (b0) at (-0.800,-0.000) {A};
  \node[base node, font=\tiny\ttfamily] (b1) at (-0.800,-0.650) {C};
  \node[base node, font=\tiny\ttfamily] (b2) at (-1.009,-1.265) {G};
  \node[base node, font=\tiny\ttfamily] (b3) at (-1.406,-1.781) {T};
  \node[base node, font=\tiny\ttfamily] (b4) at (-1.359,-2.595) {A};
  \node[base node, font=\tiny\ttfamily] (b5) at (-0.937,-2.920) {C};
  \node[base node, font=\tiny\ttfamily] (b6) at (-0.137,-2.756) {A};
  \node[base node, font=\tiny\ttfamily] (b7) at (0.259,-2.241) {C};
  \node[base node, font=\tiny\ttfamily] (b8) at (0.800,-1.881) {G};
  \node[base node, font=\tiny\ttfamily] (b9) at (1.009,-1.265) {T};
  \node[base node, font=\tiny\ttfamily] (b10) at (0.800,-0.650) {G};
  \node[base node, font=\tiny\ttfamily] (b11) at (0.800,0.000) {T};
  \draw[-{Stealth[length=1.6pt,width=1.2pt]}, dnaWire, thin] (b0) -- (b1);
  \draw[-{Stealth[length=1.6pt,width=1.2pt]}, dnaWire, thin] (b1) -- (b2);
  \draw[-{Stealth[length=1.6pt,width=1.2pt]}, dnaWire, thin] (b2) -- (b3);
  \draw[-{Stealth[length=1.6pt,width=1.2pt]}, dnaWire, thin] (b3) -- (b4);
  \draw[-{Stealth[length=1.6pt,width=1.2pt]}, dnaWire, thin] (b4) -- (b5);
  \draw[-{Stealth[length=1.6pt,width=1.2pt]}, dnaWire, thin] (b5) -- (b6);
  \draw[-{Stealth[length=1.6pt,width=1.2pt]}, dnaWire, thin] (b6) -- (b7);
  \draw[-{Stealth[length=1.6pt,width=1.2pt]}, dnaWire, thin] (b7) -- (b8);
  \draw[-{Stealth[length=1.6pt,width=1.2pt]}, dnaWire, thin] (b8) -- (b9);
  \draw[-{Stealth[length=1.6pt,width=1.2pt]}, dnaWire, thin] (b9) -- (b10);
  \draw[-{Stealth[length=1.6pt,width=1.2pt]}, dnaWire, thin] (b10) -- (b11);
  \draw[dnaAT, thick] (b0) -- (b11);
  \draw[dnaCG, thick] (b1) -- (b10);
  \draw[dnaCG, thick] (b2) -- (b7);
  \draw[dnaAT, thick] (b3) -- (b6);
\end{tikzpicture}}

\smallskip
\footnotesize (b) Folded view
\end{minipage}
\end{center}

In the folded view, backbone arrows indicate the $5' \to 3'$ direction (left to right, matching the arc-view ordering), and arcs are rendered as base-pair bonds.\footnote{All diagrams in this paper are schematic and intended for illustration only: the relative lengths of backbone bonds and Watson--Crick pair bonds are not to scale, and real hairpins have a minimum loop size imposed by geometric and physical constraints.}

Conversely, every non-pseudoknotted secondary structure on $w$ defines a morphism $\eps \to w$ in $\Ddna$. Equivalently, $\Hom(\eps, w)$ is the set of generalized elements (closed terms) of type $w$, and thus catalogs the secondary structures of $w$.

\subsection{Identity and composition}

The identity morphism $\id_w \colon w \to w$ is the straight-through matching: position $i$ on the source connects to position $i$ on the target, with no internal arcs.

\begin{center}
\begin{tikzpicture}[x=1cm, y=1cm]
  \node[base node] (s0) at (1.00,3.00) {A};
  \node[base node] (s1) at (2.00,3.00) {C};
  \node[base node] (s2) at (3.00,3.00) {G};
  \node[base node] (s3) at (4.00,3.00) {T};
  \node[base node] (t0) at (1.00,0.00) {A};
  \node[base node] (t1) at (2.00,0.00) {C};
  \node[base node] (t2) at (3.00,0.00) {G};
  \node[base node] (t3) at (4.00,0.00) {T};
  \draw[at wire] (s0.south) .. controls (1.00,2.16) and (1.00,0.84) .. (t0.north);
  \draw[cg wire] (s1.south) .. controls (2.00,2.16) and (2.00,0.84) .. (t1.north);
  \draw[cg wire rev] (s2.south) .. controls (3.00,2.16) and (3.00,0.84) .. (t2.north);
  \draw[at wire rev] (s3.south) .. controls (4.00,2.16) and (4.00,0.84) .. (t3.north);
  \node[rigid node] at (2.5,-0.6) {$\id_w$};
\end{tikzpicture}
\end{center}

Given $f \colon x \to y$ and $g \colon y \to z$, the composite $g \circ f \colon x \to z$ is formed by vertical stacking: place the rectangle for $f$ on top and the rectangle for $g$ below, gluing along the shared $y$-boundary. After gluing, simplify by planar isotopy and cap-cup cancellations (the snake relations). Associativity of composition follows from the topological nature of stacking.

\begin{center}
\begin{tikzpicture}[x=0.52cm, y=0.62cm]
  \node[rigid node, above right] at (-0.50,6.30) {\small $f$};
  \node[rigid node, above right] at (-0.50,3.30) {\small $g$};
  \node[base node] (m0s0) at (3.00,6.00) {A};
  \node[base node] (m0s1) at (4.00,6.00) {C};
  \node[base node] (m0s2) at (5.00,6.00) {C};
  \node[base node] (m0s3) at (6.00,6.00) {A};
  \node[base node] (m0s4) at (7.00,6.00) {T};
  \node[base node] (m0s5) at (8.00,6.00) {G};
  \node[base node] (m0s6) at (9.00,6.00) {G};
  \node[base node] (m0s7) at (10.00,6.00) {A};
  \node[base node] (m0s8) at (11.00,6.00) {C};
  \node[base node] (m0s9) at (12.00,6.00) {T};
  \node[base node] (m0t0) at (1.00,3.00) {A};
  \node[base node] (m0t1) at (2.00,3.00) {C};
  \node[base node] (m0t2) at (3.00,3.00) {G};
  \node[base node] (m0t3) at (4.00,3.00) {A};
  \node[base node] (m0t4) at (5.00,3.00) {T};
  \node[base node] (m0t5) at (6.00,3.00) {C};
  \node[base node] (m0t6) at (7.00,3.00) {G};
  \node[base node] (m0t7) at (8.00,3.00) {A};
  \node[base node] (m0t8) at (9.00,3.00) {T};
  \node[base node] (m0t9) at (10.00,3.00) {C};
  \node[base node] (m0t10) at (11.00,3.00) {G};
  \node[base node] (m0t11) at (12.00,3.00) {A};
  \node[base node] (m0t12) at (13.00,3.00) {T};
  \node[base node] (m0t13) at (14.00,3.00) {C};
  \draw[cg arc rev] (m0t9.north) .. controls +(0,0.55) and +(0,0.55) .. (m0t10.north);
  \draw[cg arc rev] (m0t5.north) .. controls +(0,0.55) and +(0,0.55) .. (m0t6.north);
  \draw[cg arc rev] (m0t1.north) .. controls +(0,0.55) and +(0,0.55) .. (m0t2.north);
  \draw[at wire] (m0s0.south) .. controls (3.00,5.16) and (1.00,3.84) .. (m0t0.north);
  \draw[at wire] (m0s3.south) .. controls (6.00,5.16) and (4.00,3.84) .. (m0t3.north);
  \draw[at wire rev] (m0s4.south) .. controls (7.00,5.16) and (5.00,3.84) .. (m0t4.north);
  \draw[at wire] (m0s7.south) .. controls (10.00,5.16) and (8.00,3.84) .. (m0t7.north);
  \node[base node] (m1s0) at (1.00,3.00) {A};
  \node[base node] (m1s1) at (2.00,3.00) {C};
  \node[base node] (m1s2) at (3.00,3.00) {G};
  \node[base node] (m1s3) at (4.00,3.00) {A};
  \node[base node] (m1s4) at (5.00,3.00) {T};
  \node[base node] (m1s5) at (6.00,3.00) {C};
  \node[base node] (m1s6) at (7.00,3.00) {G};
  \node[base node] (m1s7) at (8.00,3.00) {A};
  \node[base node] (m1s8) at (9.00,3.00) {T};
  \node[base node] (m1s9) at (10.00,3.00) {C};
  \node[base node] (m1s10) at (11.00,3.00) {G};
  \node[base node] (m1s11) at (12.00,3.00) {A};
  \node[base node] (m1s12) at (13.00,3.00) {T};
  \node[base node] (m1s13) at (14.00,3.00) {C};
  \node[base node] (m1t0) at (4.00,0.00) {A};
  \node[base node] (m1t1) at (5.00,0.00) {T};
  \node[base node] (m1t2) at (6.00,0.00) {C};
  \node[base node] (m1t3) at (7.00,0.00) {G};
  \node[base node] (m1t4) at (8.00,0.00) {C};
  \node[base node] (m1t5) at (9.00,0.00) {G};
  \node[base node] (m1t6) at (10.00,0.00) {A};
  \node[base node] (m1t7) at (11.00,0.00) {T};
  \draw[cg arc] (m1s1.south) .. controls +(0,-0.55) and +(0,-0.55) .. (m1s2.south);
  \draw[at arc] (m1s3.south) .. controls +(0,-0.55) and +(0,-0.55) .. (m1s4.south);
  \draw[at arc] (m1s7.south) .. controls +(0,-0.55) and +(0,-0.55) .. (m1s8.south);
  \draw[at wire] (m1s0.south) .. controls (1.00,2.16) and (4.00,0.84) .. (m1t0.north);
  \draw[cg wire] (m1s5.south) .. controls (6.00,2.16) and (6.00,0.84) .. (m1t2.north);
  \draw[cg wire rev] (m1s6.south) .. controls (7.00,2.16) and (7.00,0.84) .. (m1t3.north);
  \draw[cg wire] (m1s9.south) .. controls (10.00,2.16) and (8.00,0.84) .. (m1t4.north);
  \draw[cg wire rev] (m1s10.south) .. controls (11.00,2.16) and (9.00,0.84) .. (m1t5.north);
  \node[rigid eq] at (15.0,3.0) {$=$};
  \node[rigid node, above right] at (15.50,4.90) {\small $g\circ f$};
  \node[base node] (cs0) at (16.00,4.50) {A};
  \node[base node] (cs1) at (17.00,4.50) {C};
  \node[base node] (cs2) at (18.00,4.50) {C};
  \node[base node] (cs3) at (19.00,4.50) {A};
  \node[base node] (cs4) at (20.00,4.50) {T};
  \node[base node] (cs5) at (21.00,4.50) {G};
  \node[base node] (cs6) at (22.00,4.50) {G};
  \node[base node] (cs7) at (23.00,4.50) {A};
  \node[base node] (cs8) at (24.00,4.50) {C};
  \node[base node] (cs9) at (25.00,4.50) {T};
  \node[base node] (ct0) at (17.00,1.50) {A};
  \node[base node] (ct1) at (18.00,1.50) {T};
  \node[base node] (ct2) at (19.00,1.50) {C};
  \node[base node] (ct3) at (20.00,1.50) {G};
  \node[base node] (ct4) at (21.00,1.50) {C};
  \node[base node] (ct5) at (22.00,1.50) {G};
  \node[base node] (ct6) at (23.00,1.50) {A};
  \node[base node] (ct7) at (24.00,1.50) {T};
  \draw[at arc] (cs3.south) .. controls +(0,-0.55) and +(0,-0.55) .. (cs4.south);
  \draw[cg arc rev] (ct4.north) .. controls +(0,0.55) and +(0,0.55) .. (ct5.north);
  \draw[cg arc rev] (ct2.north) .. controls +(0,0.55) and +(0,0.55) .. (ct3.north);
  \draw[at wire] (cs0.south) .. controls (16.00,3.66) and (17.00,2.34) .. (ct0.north);
\end{tikzpicture}
\end{center}

Under composition, closed loops---connected components with no boundary endpoints---may arise. We adopt the \emph{normalized} convention throughout: every closed loop equals $1_\eps$ and is erased. Other conventions (e.g.\ loops weighted by scalars $\delta_{AT}$, $\delta_{CG}$) yield valid alternative categories; see Section~\ref{sec:remarks}.

\subsection{Tensor product on morphisms}

The tensor product $f \otimes g$ of morphisms $f \colon x_1 \to y_1$ and $g \colon x_2 \to y_2$ is their horizontal juxtaposition: the diagram for $f$ is placed to the left of the diagram for $g$, giving a morphism $x_1 x_2 \to y_1 y_2$.

\begin{center}
\resizebox{0.98\textwidth}{!}{
\begin{tikzpicture}[x=0.55cm, y=0.55cm]
  \node[base node] (tfs0) at (1.00,3.00) {A};
  \node[base node] (tfs1) at (2.00,3.00) {C};
  \node[base node] (tfs2) at (3.00,3.00) {G};
  \node[base node] (tfs3) at (4.00,3.00) {T};
  \node[base node] (tfs4) at (5.00,3.00) {A};
  \node[base node] (tfs5) at (6.00,3.00) {C};
  \node[base node] (tft0) at (1.00,0.00) {G};
  \node[base node] (tft1) at (2.00,0.00) {T};
  \node[base node] (tft2) at (3.00,0.00) {A};
  \node[base node] (tft3) at (4.00,0.00) {C};
  \node[base node] (tft4) at (5.00,0.00) {G};
  \node[base node] (tft5) at (6.00,0.00) {T};
  \draw[cg arc] (tfs1.south) .. controls +(0,-0.55) and +(0,-0.55) .. (tfs2.south);
  \draw[cg arc rev] (tft3.north) .. controls +(0,0.55) and +(0,0.55) .. (tft4.north);
  \draw[at wire] (tfs0.south) .. controls (1.00,2.16) and (3.00,0.84) .. (tft2.north);
  \draw[at wire rev] (tfs3.south) .. controls (4.00,2.16) and (6.00,0.84) .. (tft5.north);
  \node[base node] (tgs0) at (8.90,3.00) {T};
  \node[base node] (tgs1) at (9.90,3.00) {G};
  \node[base node] (tgs2) at (10.90,3.00) {C};
  \node[base node] (tgs3) at (11.90,3.00) {A};
  \node[base node] (tgt0) at (8.90,0.00) {T};
  \node[base node] (tgt1) at (9.90,0.00) {G};
  \node[base node] (tgt2) at (10.90,0.00) {C};
  \node[base node] (tgt3) at (11.90,0.00) {A};
  \draw[cg arc rev] (tgs1.south) .. controls +(0,-0.55) and +(0,-0.55) .. (tgs2.south);
  \draw[cg arc] (tgt1.north) .. controls +(0,0.55) and +(0,0.55) .. (tgt2.north);
  \draw[at wire rev] (tgs0.south) .. controls (8.90,2.16) and (8.90,0.84) .. (tgt0.north);
  \draw[at wire] (tgs3.south) .. controls (11.90,2.16) and (11.90,0.84) .. (tgt3.north);
  \node[base node] (ths0) at (15.40,3.00) {A};
  \node[base node] (ths1) at (16.40,3.00) {C};
  \node[base node] (ths2) at (17.40,3.00) {G};
  \node[base node] (ths3) at (18.40,3.00) {T};
  \node[base node] (ths4) at (19.40,3.00) {A};
  \node[base node] (ths5) at (20.40,3.00) {C};
  \node[base node] (ths6) at (21.40,3.00) {T};
  \node[base node] (ths7) at (22.40,3.00) {G};
  \node[base node] (ths8) at (23.40,3.00) {C};
  \node[base node] (ths9) at (24.40,3.00) {A};
  \node[base node] (tht0) at (15.40,0.00) {G};
  \node[base node] (tht1) at (16.40,0.00) {T};
  \node[base node] (tht2) at (17.40,0.00) {A};
  \node[base node] (tht3) at (18.40,0.00) {C};
  \node[base node] (tht4) at (19.40,0.00) {G};
  \node[base node] (tht5) at (20.40,0.00) {T};
  \node[base node] (tht6) at (21.40,0.00) {T};
  \node[base node] (tht7) at (22.40,0.00) {G};
  \node[base node] (tht8) at (23.40,0.00) {C};
  \node[base node] (tht9) at (24.40,0.00) {A};
  \draw[cg arc] (ths1.south) .. controls +(0,-0.55) and +(0,-0.55) .. (ths2.south);
  \draw[cg arc rev] (ths7.south) .. controls +(0,-0.55) and +(0,-0.55) .. (ths8.south);
  \draw[cg arc] (tht7.north) .. controls +(0,0.55) and +(0,0.55) .. (tht8.north);
  \draw[cg arc rev] (tht3.north) .. controls +(0,0.55) and +(0,0.55) .. (tht4.north);
  \draw[at wire] (ths0.south) .. controls (15.40,2.16) and (17.40,0.84) .. (tht2.north);
  \draw[at wire rev] (ths3.south) .. controls (18.40,2.16) and (20.40,0.84) .. (tht5.north);
  \draw[at wire rev] (ths6.south) .. controls (21.40,2.16) and (21.40,0.84) .. (tht6.north);
  \draw[at wire] (ths9.south) .. controls (24.40,2.16) and (24.40,0.84) .. (tht9.north);
  \node[rigid node, above] at (3.5,3.45) {\small $f$};
  \node[rigid node, above] at (10.4,3.45) {\small $g$};
  \node[rigid node] at (7.0,1.45) {$\otimes$};
  \node[rigid eq] at (13.1,1.45) {$=$};
\end{tikzpicture}}
\end{center}

\subsection{Evaluation, coevaluation, and rigidity}

For each word $w = b_1 \cdots b_n$, we define:
\begin{itemize}
  \item The \emph{coevaluation} (cap):
  \[
  \coev_w \colon \eps \to w\dv \otimes w.
  \]
  This is the noncrossing diagram that pairs each base $b_i$ in $w$ with the corresponding complementary base $\comp{b_i}$ in $w\dv$, forming $n$ nested arcs. It represents the canonical duplex pairing.

  \item The \emph{evaluation} (cup):
  \[
  \ev_w \colon w \otimes w\dv \to \eps.
  \]
  This is the vertically reflected version: arcs on the source side pair each position of $w$ with its complement in $w\dv$.
\end{itemize}

\begin{center}
\begin{minipage}[t]{0.48\textwidth}
\centering
\begin{tikzpicture}[
  x=1cm, y=1cm,
  at arc/.style={dnaAT, thick, ->},
  at arc rev/.style={dnaAT, thick, <-},
  cg arc/.style={dnaCG, thick, ->},
  cg arc rev/.style={dnaCG, thick, <-},
]
  \node[base node] (s0) at (1.00,2.40) {A};
  \node[base node] (s1) at (2.00,2.40) {C};
  \node[base node] (s2) at (3.00,2.40) {G};
  \node[base node] (s3) at (4.00,2.40) {C};
  \node[base node] (s4) at (5.00,2.40) {G};
  \node[base node] (s5) at (6.00,2.40) {T};
  \draw[at arc] (s0.south) .. controls +(0,-1.85) and +(0,-1.85) .. (s5.south);
  \draw[cg arc] (s1.south) .. controls +(0,-1.20) and +(0,-1.20) .. (s4.south);
  \draw[cg arc rev] (s2.south) .. controls +(0,-0.55) and +(0,-0.55) .. (s3.south);
  \node[rigid node] at (2,2.85) {\footnotesize$w = \texttt{ACG}$};
  \node[rigid node] at (5,2.85) {\footnotesize$w^\vee = \texttt{CGT}$};
  \node[rigid node] at (3.5,-0.80) {$\ev_w \colon w \otimes w^\vee \to \eps$};
\end{tikzpicture}
\end{minipage}\hfill
\begin{minipage}[t]{0.48\textwidth}
\centering
\begin{tikzpicture}[
  x=1cm, y=1cm,
  at arc/.style={dnaAT, thick, ->},
  at arc rev/.style={dnaAT, thick, <-},
  cg arc/.style={dnaCG, thick, ->},
  cg arc rev/.style={dnaCG, thick, <-},
]
  \node[base node] (t0) at (1.00,0.00) {C};
  \node[base node] (t1) at (2.00,0.00) {G};
  \node[base node] (t2) at (3.00,0.00) {T};
  \node[base node] (t3) at (4.00,0.00) {A};
  \node[base node] (t4) at (5.00,0.00) {C};
  \node[base node] (t5) at (6.00,0.00) {G};
  \draw[cg arc rev] (t0.north) .. controls +(0,1.85) and +(0,1.85) .. (t5.north);
  \draw[cg arc] (t1.north) .. controls +(0,1.20) and +(0,1.20) .. (t4.north);
  \draw[at arc] (t2.north) .. controls +(0,0.55) and +(0,0.55) .. (t3.north);
  \node[rigid node] at (2,-0.55) {\footnotesize$w^\vee = \texttt{CGT}$};
  \node[rigid node] at (5,-0.55) {\footnotesize$w = \texttt{ACG}$};
  \node[rigid node] at (3.5,-1.10) {$\coev_w \colon \eps \to w^\vee \otimes w$};
\end{tikzpicture}
\end{minipage}
\end{center}

\begin{proposition}\label{prop:snake}
The snake identities hold in $\Ddna$:
\[
(\ev_w \otimes \id_w)\circ(\id_w \otimes \coev_w) = \id_w,
\]
\[
(\id_{w\dv} \otimes \ev_w)\circ(\coev_w \otimes \id_{w\dv}) = \id_{w\dv}.
\]
\end{proposition}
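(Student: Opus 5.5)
We verify the first snake identity by tracking each strand through the stacked diagram; the second follows by the same argument or by reflection. Write $w = b_1\cdots b_n$ and index positions explicitly. In the top rectangle $\id_w \otimes \coev_w \colon w \to w\,w\dv\,w$, source position $i$ is joined by a through-wire to target position $i$ (in the first block $w$). The coevaluation arcs join the target position carrying $\comp{b_i}$ in the middle block $w\dv$ to the target position carrying $b_i$ in the last block $w$. Concretely, index the middle block by $n+1,\dots,2n$, so that position $n+k$ carries $\comp{b_{n+1-k}}$. Then base $b_i$ at position $2n+i$ is joined to position $2n+1-i$.

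In the bottom rectangle $\ev_w \otimes \id_w$, source position $i$ in the first $w$ is joined by an arc to position $2n+1-i$ in $w\dv$. Source position $2n+i$ is joined by a through-wire to target position $i$.

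Gluing along the shared boundary $w\,w\dv\,w$, we follow the strand starting at source position $i$ of the composite:
\begin{enumerate}
\item it goes down to middle position $i$;
\item it travels along the $\ev$ arc to middle position $2n+1-i$;
\item it travels up the $\coev$ arc to middle position $2n+i$;
\item it descends along the through-wire to target position $i$.
\end{enumerate}
Each middle-boundary point has exactly one incident edge from above and one from below, so every strand in the glued picture is a path through these points. Since all $3n$ middle points are visited by the $n$ paths just described, there are no closed loops and no leftover arcs. The resulting connectivity is exactly $i \mapsto i$, with no source or target arcs, and the typing is consistent because each through-wire carries $b_i$ at both ends.

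It remains to show that the glued diagram is planar-isotopic, rel boundary, to the straight-through diagram $\id_w$. This is the main step requiring care, since morphisms are isotopy classes rather than bare matchings. I would argue it in two stages.
\begin{itemize}
\item \textbf{Same matching implies isotopic.} Two noncrossing planar matchings in a rectangle with the same boundary points and the same connectivity are isotopic rel boundary. Each system of disjoint arcs cuts the disk into regions determined by the combinatorial matching, so a homeomorphism can be built region by region and is isotopic to the identity by Alexander's trick.
\item \textbf{The glued diagram is noncrossing.} The glued diagram is planar because the two stacked diagrams are planar and meet only at boundary points. Its $n$ strands are pairwise-nested zigzags, which are disjoint, and straightening each zigzag is a planar isotopy.
\end{itemize}
Since the glued diagram is noncrossing and has the identity connectivity, the first stage gives the isotopy to $\id_w$.

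For the second identity, the strand from source position $j$ of $w\dv$ traverses the $\coev$ arc and then the $\ev$ arc in the mirror-image pattern, and again lands at target position $j$. Alternatively, the second identity is the image of the first under the reflection symmetry of the diagram calculus, combined with $(w\dv)\dv = w$.
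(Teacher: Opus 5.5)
Your proposal is correct and takes essentially the same route as the paper, which simply observes that each side is a zig-zag that straightens to the identity wire by planar isotopy; your explicit strand-tracking (the strand from source position $i$ passes through middle points $i$, $2n+1-i$, $2n+i$ and ends at target position $i$, and all $3n$ middle points are used, so no closed loops arise), together with your observation that connectivity determines the isotopy class rel boundary, makes that picture rigorous. One harmless slip: in the second identity the strand leaving source position $j$ meets the $\ev$ arc before the $\coev$ arc, exactly as in the first identity, but it still lands at target position $j$, so the conclusion is unaffected.
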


\begin{proof}
Each identity asserts that a zig-zag of arcs---a cup followed by a cap on the same strand---can be straightened to a single through-strand. In the diagrammatic picture, the left-hand side of the first identity is a wire for $w$ that bends up to create a cup (introducing $w\dv$ and $w$), then bends back down via a cap (annihilating $w$ and $w\dv$). By planar isotopy, this zig-zag straightens to the identity wire for $w$. The second identity is analogous.
\end{proof}

\begin{center}
\begin{tikzpicture}[x=0.72cm, y=0.72cm]
  \node[base node] (m0s0) at (1.00,6.00) {A};
  \node[base node] (m0s1) at (2.00,6.00) {C};
  \node[base node] (m0s2) at (3.00,6.00) {G};
  \node[base node] (m0t0) at (1.00,3.00) {A};
  \node[base node] (m0t1) at (2.00,3.00) {C};
  \node[base node] (m0t2) at (3.00,3.00) {G};
  \node[base node] (m0t3) at (4.00,3.00) {C};
  \node[base node] (m0t4) at (5.00,3.00) {G};
  \node[base node] (m0t5) at (6.00,3.00) {T};
  \node[base node] (m0t6) at (7.00,3.00) {A};
  \node[base node] (m0t7) at (8.00,3.00) {C};
  \node[base node] (m0t8) at (9.00,3.00) {G};
  \draw[cg arc rev] (m0t3.north) .. controls +(0,1.85) and +(0,1.85) .. (m0t8.north);
  \draw[cg arc] (m0t4.north) .. controls +(0,1.20) and +(0,1.20) .. (m0t7.north);
  \draw[at arc] (m0t5.north) .. controls +(0,0.55) and +(0,0.55) .. (m0t6.north);
  \draw[at wire] (m0s0.south) .. controls (1.00,5.16) and (1.00,3.84) .. (m0t0.north);
  \draw[cg wire] (m0s1.south) .. controls (2.00,5.16) and (2.00,3.84) .. (m0t1.north);
  \draw[cg wire rev] (m0s2.south) .. controls (3.00,5.16) and (3.00,3.84) .. (m0t2.north);
  \node[base node] (m1s0) at (1.00,3.00) {A};
  \node[base node] (m1s1) at (2.00,3.00) {C};
  \node[base node] (m1s2) at (3.00,3.00) {G};
  \node[base node] (m1s3) at (4.00,3.00) {C};
  \node[base node] (m1s4) at (5.00,3.00) {G};
  \node[base node] (m1s5) at (6.00,3.00) {T};
  \node[base node] (m1s6) at (7.00,3.00) {A};
  \node[base node] (m1s7) at (8.00,3.00) {C};
  \node[base node] (m1s8) at (9.00,3.00) {G};
  \node[base node] (m1t0) at (7.00,0.00) {A};
  \node[base node] (m1t1) at (8.00,0.00) {C};
  \node[base node] (m1t2) at (9.00,0.00) {G};
  \draw[at arc] (m1s0.south) .. controls +(0,-1.85) and +(0,-1.85) .. (m1s5.south);
  \draw[cg arc] (m1s1.south) .. controls +(0,-1.20) and +(0,-1.20) .. (m1s4.south);
  \draw[cg arc rev] (m1s2.south) .. controls +(0,-0.55) and +(0,-0.55) .. (m1s3.south);
  \draw[at wire] (m1s6.south) .. controls (7.00,2.16) and (7.00,0.84) .. (m1t0.north);
  \draw[cg wire] (m1s7.south) .. controls (8.00,2.16) and (8.00,0.84) .. (m1t1.north);
  \draw[cg wire rev] (m1s8.south) .. controls (9.00,2.16) and (9.00,0.84) .. (m1t2.north);
  \node[rigid eq] at (10.5,3.0) {$=$};
  \node[base node] (snake_rhss0) at (13.00,4.50) {A};
  \node[base node] (snake_rhss1) at (14.00,4.50) {C};
  \node[base node] (snake_rhss2) at (15.00,4.50) {G};
  \node[base node] (snake_rhst0) at (13.00,1.50) {A};
  \node[base node] (snake_rhst1) at (14.00,1.50) {C};
  \node[base node] (snake_rhst2) at (15.00,1.50) {G};
  \draw[at wire] (snake_rhss0.south) .. controls (13.00,3.66) and (13.00,2.34) .. (snake_rhst0.north);
  \draw[cg wire] (snake_rhss1.south) .. controls (14.00,3.66) and (14.00,2.34) .. (snake_rhst1.north);
  \draw[cg wire rev] (snake_rhss2.south) .. controls (15.00,3.66) and (15.00,2.34) .. (snake_rhst2.north);
  \node[rigid node] at (8.7,-0.85) {$\left(\ev_w \otimes \id_w\right)\circ\left(\id_w \otimes \coev_w\right)=\id_w$};
\end{tikzpicture}
\end{center}
\begin{center}
\begin{tikzpicture}[x=0.72cm, y=0.72cm]
  \node[base node] (m0s0) at (7.00,6.00) {C};
  \node[base node] (m0s1) at (8.00,6.00) {G};
  \node[base node] (m0s2) at (9.00,6.00) {T};
  \node[base node] (m0t0) at (1.00,3.00) {C};
  \node[base node] (m0t1) at (2.00,3.00) {G};
  \node[base node] (m0t2) at (3.00,3.00) {T};
  \node[base node] (m0t3) at (4.00,3.00) {A};
  \node[base node] (m0t4) at (5.00,3.00) {C};
  \node[base node] (m0t5) at (6.00,3.00) {G};
  \node[base node] (m0t6) at (7.00,3.00) {C};
  \node[base node] (m0t7) at (8.00,3.00) {G};
  \node[base node] (m0t8) at (9.00,3.00) {T};
  \draw[cg arc rev] (m0t0.north) .. controls +(0,1.85) and +(0,1.85) .. (m0t5.north);
  \draw[cg arc] (m0t1.north) .. controls +(0,1.20) and +(0,1.20) .. (m0t4.north);
  \draw[at arc] (m0t2.north) .. controls +(0,0.55) and +(0,0.55) .. (m0t3.north);
  \draw[cg wire] (m0s0.south) .. controls (7.00,5.16) and (7.00,3.84) .. (m0t6.north);
  \draw[cg wire rev] (m0s1.south) .. controls (8.00,5.16) and (8.00,3.84) .. (m0t7.north);
  \draw[at wire rev] (m0s2.south) .. controls (9.00,5.16) and (9.00,3.84) .. (m0t8.north);
  \node[base node] (m1s0) at (1.00,3.00) {C};
  \node[base node] (m1s1) at (2.00,3.00) {G};
  \node[base node] (m1s2) at (3.00,3.00) {T};
  \node[base node] (m1s3) at (4.00,3.00) {A};
  \node[base node] (m1s4) at (5.00,3.00) {C};
  \node[base node] (m1s5) at (6.00,3.00) {G};
  \node[base node] (m1s6) at (7.00,3.00) {C};
  \node[base node] (m1s7) at (8.00,3.00) {G};
  \node[base node] (m1s8) at (9.00,3.00) {T};
  \node[base node] (m1t0) at (1.00,0.00) {C};
  \node[base node] (m1t1) at (2.00,0.00) {G};
  \node[base node] (m1t2) at (3.00,0.00) {T};
  \draw[at arc] (m1s3.south) .. controls +(0,-1.85) and +(0,-1.85) .. (m1s8.south);
  \draw[cg arc] (m1s4.south) .. controls +(0,-1.20) and +(0,-1.20) .. (m1s7.south);
  \draw[cg arc rev] (m1s5.south) .. controls +(0,-0.55) and +(0,-0.55) .. (m1s6.south);
  \draw[cg wire] (m1s0.south) .. controls (1.00,2.16) and (1.00,0.84) .. (m1t0.north);
  \draw[cg wire rev] (m1s1.south) .. controls (2.00,2.16) and (2.00,0.84) .. (m1t1.north);
  \draw[at wire rev] (m1s2.south) .. controls (3.00,2.16) and (3.00,0.84) .. (m1t2.north);
  \node[rigid eq] at (10.5,3.0) {$=$};
  \node[base node] (snake_rhss0) at (13.00,4.50) {C};
  \node[base node] (snake_rhss1) at (14.00,4.50) {G};
  \node[base node] (snake_rhss2) at (15.00,4.50) {T};
  \node[base node] (snake_rhst0) at (13.00,1.50) {C};
  \node[base node] (snake_rhst1) at (14.00,1.50) {G};
  \node[base node] (snake_rhst2) at (15.00,1.50) {T};
  \draw[cg wire] (snake_rhss0.south) .. controls (13.00,3.66) and (13.00,2.34) .. (snake_rhst0.north);
  \draw[cg wire rev] (snake_rhss1.south) .. controls (14.00,3.66) and (14.00,2.34) .. (snake_rhst1.north);
  \draw[at wire rev] (snake_rhss2.south) .. controls (15.00,3.66) and (15.00,2.34) .. (snake_rhst2.north);
  \node[rigid node] at (8.7,-0.85) {$\left(\id_{w^\vee} \otimes \ev_w\right)\circ\left(\coev_w \otimes \id_{w^\vee}\right)=\id_{w^\vee}$};
\end{tikzpicture}
\end{center}

\begin{theorem}\label{thm:pivotal}
$\Ddna$ is a strict pivotal monoidal category.
\end{theorem}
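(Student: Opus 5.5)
The proof assembles the structure in four layers: a category, a strict monoidal category, right and left rigidity, and the pivotal condition $w^{\vee\vee}=w$.

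\textbf{Category.} First check that $\Ddna$ is a category. The composite of two typed noncrossing matchings, glued along the shared $y$-boundary, is again a planar $1$-manifold in the stacked rectangle. Each component is one of four kinds: an arc with both ends on $x$, an arc with both ends on $z$, a strand from $x$ to $z$, or a closed loop. Loops are erased under the normalized convention.

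Typing is preserved along each component. Through-wires carry the same base, and each arc switches a base to its Watson--Crick complement. A component alternates between these two kinds of edges, so its two endpoints carry the same base if it is a through-strand. They carry complementary bases if it is an arc. This is a parity count that uses the orientation conventions: an arc reverses the vertical direction, and a through-strand preserves it.

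Well-definedness on isotopy classes holds because isotopies of $f$ and $g$ fixing the boundary glue to an isotopy of the stack. Associativity holds because both bracketings produce the same triple stack, and loop erasure commutes with further gluing. The identity laws hold because gluing $\id_y$ only lengthens strands.

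\textbf{Strict monoidal structure.} Concatenation is strictly associative and unital on objects. Horizontal juxtaposition is strictly associative on diagrams up to isotopy. Bifunctoriality reduces to the interchange law $(g\otimes g')\circ(f\otimes f')=(g\circ f)\otimes(g'\circ f')$. Both sides are literally the same planar picture, since the two columns do not interact, and loops arising in either column are erased in either order. We also have $\id_x\otimes\id_y=\id_{xy}$.

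\textbf{Rigidity.} Right rigidity with $w^*=w\dv$ is given by $\ev_w,\coev_w$, with the snake identities supplied by Proposition~\ref{prop:snake}. For left rigidity, take the left dual also to be $w\dv$, with structure maps $\ev'_w\colon w\dv\otimes w\to\eps$ and $\coev'_w\colon\eps\to w\otimes w\dv$ given by the nested arc diagrams. These are exactly $\ev_{w\dv}$ and $\coev_{w\dv}$, which is legitimate since $(w\dv)\dv=w$. Their snake identities are therefore instances of Proposition~\ref{prop:snake} applied to the word $w\dv$.

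\textbf{Strict pivotality.} Property~(3) gives $w^{**}=w$ on objects, and we take the pivotal isomorphism $w\to w^{**}$ to be the identity. The remaining step is the main obstacle: we must check that this choice is compatible with the duality structure. Concretely, the canonical double-dual functor must be the identity functor, and the coherence $(uv)\dv=v\dv u\dv$ must hold at the level of duality morphisms. Monoidality and naturality of the identity transformation are then automatic.

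To verify this, compute the right dual (mate) of a morphism $f\colon x\to y$:
\[
f^*=(\id_{x\dv}\otimes\ev_y)\circ(\id_{x\dv}\otimes f\otimes\id_{y\dv})\circ(\coev_x\otimes\id_{y\dv}).
\]
By planar isotopy, $f^*$ is the diagram of $f$ rotated by $\pi$ in the plane, with bases complemented. Applying this twice rotates by $2\pi$ and complements twice, recovering $f$ exactly, so $f^{**}=f$. The argument uses that arcs are unoriented curves determined by their endpoints, so no framing or twist can arise. For the coherence with products, check that $\ev_{uv}=\ev_u\circ(\id_u\otimes\ev_v\otimes\id_{u\dv})$, and similarly for $\coev$. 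This holds because nesting arcs for $uv$ against $v\dv u\dv$ is the same picture as the $v$-arcs nested inside the $u$-arcs.
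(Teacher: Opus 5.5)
Your proposal follows the same route as the paper's proof. The monoidal structure is strict by construction, right duals come from Proposition~\ref{prop:snake}, left duals come from the same arc data, and the identity is the pivotal structure because $(w\dv)\dv=w$.

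At the one point where the paper is thin, you are more careful. The paper concludes that the double-dual functor is the identity from $(w\dv)\dv=w$ on objects alone. You check $f^{**}=f$ on morphisms by the rotation-by-$\pi$ description of the mate. You also verify $\ev_{uv}=\ev_u\circ(\id_u\otimes\ev_v\otimes\id_{u\dv})$ and its analogue for $\coev$. These two checks are what actually make the identity a monoidal natural isomorphism $A\to A^{**}$. Likewise, naming the left-duality data as $\ev_{w\dv},\coev_{w\dv}$ is a precise version of the paper's phrase ``the same data, read with the opposite convention.''

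One claim in your category check needs repair. Morphisms of $\Ddna$ are \emph{partial} matchings: each position has degree at most one. So after gluing, your classification into four kinds of components is incomplete. A $y$-position can be matched on one side and free on the other, which produces components ending at an interior free point.

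The paper's own composition figure contains such a case. A through-strand of $f$ from an $x$-position runs into a source arc of $g$ whose other end is unmatched in $f$. The composite leaves that $x$-position free.

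You need the rule ``erase every component that does not join two outer boundary points,'' with closed loops as a special case. Your associativity argument should be phrased for this rule. It still goes through, because whether a component of the triple stack joins two outer points does not depend on the bracketing. The same rule is what makes unmatched positions of $f$ become unmatched positions of $f^*$ in your mate computation.

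A smaller point: a component does not literally alternate between through-wires and arcs, since an $f$-target arc and a $g$-source arc can be consecutive. What matters is the parity of the number of arcs, and your direction-reversal count gets that right.
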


\begin{proof}
The monoidal structure (concatenation, empty word, horizontal juxtaposition) is strict by construction. Proposition~\ref{prop:snake} establishes that every object $w$ has a right dual $w\dv$ with the required evaluation and coevaluation morphisms. The same data, read with the opposite convention, give left duals, so $\Ddna$ is rigid. Since $(w\dv)\dv = w$ for every word $w$, the double-dual functor is the identity. This is a (trivial) monoidal natural isomorphism $A \to A^{**}$, so $\Ddna$ is strictly pivotal.
\end{proof}

\subsection{The bending correspondence}

A standard consequence of rigidity is the bijection $\Hom(x, y) \cong \Hom(\eps,\, x\dv \otimes y)$, given by $\widehat{f} := (\id_{x\dv}\otimes f)\circ \coev_x \colon \eps \to x\dv \otimes y$. In the DNA setting, any interaction diagram from $x$ to $y$ is equivalent to a secondary-structure-type diagram on the combined boundary word $x\dv \otimes y$. For example, taking $x=\texttt{ATCGC}$, $y=\texttt{AGCTCG}$, and a morphism $f\colon x\to y$ with both internal arcs and through-strands:
\[
\begin{tikzpicture}[x=0.50cm, y=0.50cm]
  \node[base node] (bend_fs0) at (10.65,10.00) {A};
  \node[base node] (bend_fs1) at (11.65,10.00) {T};
  \node[base node] (bend_fs2) at (12.65,10.00) {C};
  \node[base node] (bend_fs3) at (13.65,10.00) {G};
  \node[base node] (bend_fs4) at (14.65,10.00) {C};
  \node[rigid node, left] at (10.05,10.00) {\footnotesize$x$};
  \node[base node] (bend_ft0) at (10.15,7.00) {A};
  \node[base node] (bend_ft1) at (11.15,7.00) {G};
  \node[base node] (bend_ft2) at (12.15,7.00) {C};
  \node[base node] (bend_ft3) at (13.15,7.00) {T};
  \node[base node] (bend_ft4) at (14.15,7.00) {C};
  \node[base node] (bend_ft5) at (15.15,7.00) {G};
  \node[rigid node, left] at (9.55,7.00) {\footnotesize$y$};
  \draw[cg arc rev] (bend_fs3.south) .. controls +(0,-0.55) and +(0,-0.55) .. (bend_fs4.south);
  \draw[cg arc rev] (bend_ft4.north) .. controls +(0,0.55) and +(0,0.55) .. (bend_ft5.north);
  \draw[at wire] (bend_fs0.south) .. controls (10.65,9.16) and (10.15,7.84) .. (bend_ft0.north);
  \draw[cg wire] (bend_fs2.south) .. controls (12.65,9.16) and (12.15,7.84) .. (bend_ft2.north);
  \node[rigid node] at (12.65,11.35) {\small $f \colon x \to y$};
  \node[rigid node, left] at (-0.90,6.00) {\footnotesize$\eps$};
  \node[rigid node, left] at (-0.90,3.00) {\footnotesize$x^\vee \otimes x$};
  \node[rigid node, left] at (-0.90,0.00) {\footnotesize$x^\vee \otimes y$};
  \node[base node] (m0t0) at (1.00,3.00) {G};
  \node[base node] (m0t1) at (2.00,3.00) {C};
  \node[base node] (m0t2) at (3.00,3.00) {G};
  \node[base node] (m0t3) at (4.00,3.00) {A};
  \node[base node] (m0t4) at (5.00,3.00) {T};
  \node[base node] (m0t5) at (6.00,3.00) {A};
  \node[base node] (m0t6) at (7.00,3.00) {T};
  \node[base node] (m0t7) at (8.00,3.00) {C};
  \node[base node] (m0t8) at (9.00,3.00) {G};
  \node[base node] (m0t9) at (10.00,3.00) {C};
  \draw[cg arc] (m0t0.north) .. controls +(0,3.15) and +(0,3.15) .. (m0t9.north);
  \draw[cg arc rev] (m0t1.north) .. controls +(0,2.50) and +(0,2.50) .. (m0t8.north);
  \draw[cg arc] (m0t2.north) .. controls +(0,1.85) and +(0,1.85) .. (m0t7.north);
  \draw[at arc rev] (m0t3.north) .. controls +(0,1.20) and +(0,1.20) .. (m0t6.north);
  \draw[at arc] (m0t4.north) .. controls +(0,0.55) and +(0,0.55) .. (m0t5.north);
  \node[base node] (m1s0) at (1.00,3.00) {G};
  \node[base node] (m1s1) at (2.00,3.00) {C};
  \node[base node] (m1s2) at (3.00,3.00) {G};
  \node[base node] (m1s3) at (4.00,3.00) {A};
  \node[base node] (m1s4) at (5.00,3.00) {T};
  \node[base node] (m1s5) at (6.00,3.00) {A};
  \node[base node] (m1s6) at (7.00,3.00) {T};
  \node[base node] (m1s7) at (8.00,3.00) {C};
  \node[base node] (m1s8) at (9.00,3.00) {G};
  \node[base node] (m1s9) at (10.00,3.00) {C};
  \node[base node] (m1t0) at (0.50,0.00) {G};
  \node[base node] (m1t1) at (1.50,0.00) {C};
  \node[base node] (m1t2) at (2.50,0.00) {G};
  \node[base node] (m1t3) at (3.50,0.00) {A};
  \node[base node] (m1t4) at (4.50,0.00) {T};
  \node[base node] (m1t5) at (5.50,0.00) {A};
  \node[base node] (m1t6) at (6.50,0.00) {G};
  \node[base node] (m1t7) at (7.50,0.00) {C};
  \node[base node] (m1t8) at (8.50,0.00) {T};
  \node[base node] (m1t9) at (9.50,0.00) {C};
  \node[base node] (m1t10) at (10.50,0.00) {G};
  \draw[cg arc rev] (m1s8.south) .. controls +(0,-0.55) and +(0,-0.55) .. (m1s9.south);
  \draw[cg arc rev] (m1t9.north) .. controls +(0,0.55) and +(0,0.55) .. (m1t10.north);
  \draw[cg wire rev] (m1s0.south) .. controls (1.00,2.16) and (0.50,0.84) .. (m1t0.north);
  \draw[cg wire] (m1s1.south) .. controls (2.00,2.16) and (1.50,0.84) .. (m1t1.north);
  \draw[cg wire rev] (m1s2.south) .. controls (3.00,2.16) and (2.50,0.84) .. (m1t2.north);
  \draw[at wire] (m1s3.south) .. controls (4.00,2.16) and (3.50,0.84) .. (m1t3.north);
  \draw[at wire rev] (m1s4.south) .. controls (5.00,2.16) and (4.50,0.84) .. (m1t4.north);
  \draw[at wire] (m1s5.south) .. controls (6.00,2.16) and (5.50,0.84) .. (m1t5.north);
  \draw[cg wire] (m1s7.south) .. controls (8.00,2.16) and (7.50,0.84) .. (m1t7.north);
  \node[rigid eq] at (12.9,3.0) {$=$};
  \node[rigid node, left] at (15.20,4.50) {\footnotesize$\eps$};
  \node[rigid node, left] at (15.20,1.50) {\footnotesize$x^\vee \otimes y$};
  \node[base node] (bend_rhst0) at (15.80,1.50) {G};
  \node[base node] (bend_rhst1) at (16.80,1.50) {C};
  \node[base node] (bend_rhst2) at (17.80,1.50) {G};
  \node[base node] (bend_rhst3) at (18.80,1.50) {A};
  \node[base node] (bend_rhst4) at (19.80,1.50) {T};
  \node[base node] (bend_rhst5) at (20.80,1.50) {A};
  \node[base node] (bend_rhst6) at (21.80,1.50) {G};
  \node[base node] (bend_rhst7) at (22.80,1.50) {C};
  \node[base node] (bend_rhst8) at (23.80,1.50) {T};
  \node[base node] (bend_rhst9) at (24.80,1.50) {C};
  \node[base node] (bend_rhst10) at (25.80,1.50) {G};
  \draw[cg arc rev] (bend_rhst9.north) .. controls +(0,0.55) and +(0,0.55) .. (bend_rhst10.north);
  \draw[cg arc] (bend_rhst2.north) .. controls +(0,1.20) and +(0,1.20) .. (bend_rhst7.north);
  \draw[at arc] (bend_rhst4.north) .. controls +(0,0.55) and +(0,0.55) .. (bend_rhst5.north);
  \draw[cg arc] (bend_rhst0.north) .. controls +(0,0.55) and +(0,0.55) .. (bend_rhst1.north);
\end{tikzpicture}
\]

\subsection{The straightened picture and zip-and-transfer}\label{subsec:strand}

The bending correspondence gives the operational punchline of the construction: composition in $\Ddna$ corresponds to an interface-level zip-and-transfer mechanism that can be implemented in DNA molecular computation under standard design constraints.\footnote{This correspondence is operational when each evaluation step is thermodynamically downhill and kinetically accessible. In strand-displacement systems, toeholds and branch-migration domains are designed so intended duplex products are favored over competitors~\cite{SantaLucia1998,Zhang2009,Qian2011}.} Given morphisms $f \colon x \to y$ and $g \colon y \to z$, let
\[
\widehat{f} \colon \eps \to x\dv \otimes y, \qquad \widehat{g} \colon \eps \to y\dv \otimes z
\]
be their straightened forms. These are secondary-structure states---morphisms from the empty word. The composite $g \circ f \colon x \to z$ is computed entirely at the secondary-structure level in three steps:

\begin{enumerate}
  \item \textbf{Juxtapose.} Place $\widehat{f}$ and $\widehat{g}$ side by side (tensor), obtaining a state
  \[
  \widehat{f} \otimes \widehat{g} \colon \eps \to x\dv \otimes y \otimes y\dv \otimes z.
  \]

  \item \textbf{Contract.} The adjacent segments $y$ and $y\dv$ are Watson--Crick complements. Contract them by applying the canonical evaluation $\ev_y \colon y \otimes y\dv \to \eps$. This pairs each base in $y$ with its complement in $y\dv$.

  \item \textbf{Simplify.} Straighten the surviving connectivity and erase closed components that arise entirely within the contracted region (normalized loop convention).
\end{enumerate}

The result is $\widehat{g \circ f} \colon \eps \to x\dv \otimes z$.

As an elementary discrete thermodynamic model, assign one unit to each base-pair bond and treat reachable structures with maximal bond count as stable; bond transfer itself is a rewiring step that preserves bond count, while the full zip-and-transfer trajectory is favorable when additional interface bonds form and the final state has a larger total bond count than the isolated initial states.

What happens combinatorially in step (2) is a \emph{zip-and-transfer} operation. The complementary segments $y$ and $y\dv$ zip together---each base in $y$ pairs with its complement in $y\dv$. But some of these positions already participate in arcs from $\widehat{f}$ or $\widehat{g}$. When a position in $y$ that is paired (via $\widehat{f}$) to some position in $x\dv$ zips together with its complement in $y\dv$ that is paired (via $\widehat{g}$) to some position in $z$, the base-pair connectivity \emph{transfers through} the interface: the $x\dv$ position ends up paired to the $z$ position. Arcs that do not bridge the interface close into loops and are erased.

For a concrete generic instance with nontrivial structure on all three boundaries, take
\[
x=\texttt{CGCGT},\qquad y=\texttt{CGAAGG},\qquad z=\texttt{CGCTATC},
\]
and morphisms $f\colon x\to y$, $g\colon y\to z$ chosen so that the straightened composition has two transfer paths and one surviving u-turn:
\begin{center}
\begin{minipage}{\textwidth}
  \centering
  \textbf{(a) Arc-evaluation diagram}
  \par\smallskip
  \resizebox{0.98\textwidth}{!}{
\begin{tikzpicture}[x=0.72cm, y=0.72cm]
  \begin{scope}[on background layer]
    \fill[dnaInterface] (11.5,-3.0) rectangle (11.5,2.0);
  \end{scope}
  \node[base node] (b1) at (1,0) {A};
  \node[base node] (b2) at (2,0) {C};
  \node[base node] (b3) at (3,0) {G};
  \node[base node] (b4) at (4,0) {C};
  \node[base node] (b5) at (5,0) {G};
  \node[base node] (b6) at (6,0) {C};
  \node[base node] (b7) at (7,0) {G};
  \node[base node] (b8) at (8,0) {A};
  \node[base node] (b9) at (9,0) {A};
  \node[base node] (b10) at (10,0) {G};
  \node[base node] (b11) at (11,0) {G};
  \node[base node] (b12) at (12,0) {C};
  \node[base node] (b13) at (13,0) {C};
  \node[base node] (b14) at (14,0) {T};
  \node[base node] (b15) at (15,0) {T};
  \node[base node] (b16) at (16,0) {C};
  \node[base node] (b17) at (17,0) {G};
  \node[base node] (b18) at (18,0) {C};
  \node[base node] (b19) at (19,0) {G};
  \node[base node] (b20) at (20,0) {C};
  \node[base node] (b21) at (21,0) {T};
  \node[base node] (b22) at (22,0) {A};
  \node[base node] (b23) at (23,0) {T};
  \node[base node] (b24) at (24,0) {C};
  \node[rigid node] at (21.0,-0.65) {\footnotesize$z$};
  \node[rigid node] at (14.5,-0.65) {\footnotesize$y^\vee$};
  \node[rigid node] at (8.5,-0.65) {\footnotesize$y$};
  \node[rigid node] at (3.0,-0.65) {\footnotesize$x^\vee$};
  \draw[cg arc rev] (b2.north) .. controls +(0,1.85) and +(0,1.85) .. (b7.north);
  \draw[cg arc] (b3.north) .. controls +(0,1.20) and +(0,1.20) .. (b6.north);
  \draw[cg arc rev] (b4.north) .. controls +(0,0.55) and +(0,0.55) .. (b5.north);
  \draw[at arc] (b15.north) .. controls +(0,1.85) and +(0,1.85) .. (b22.north);
  \draw[cg arc rev] (b16.north) .. controls +(0,1.20) and +(0,1.20) .. (b19.north);
  \draw[cg arc] (b17.north) .. controls +(0,0.55) and +(0,0.55) .. (b18.north);
  \draw[cg arc rev, densely dashed] (b6.south) .. controls +(0,-3.80) and +(0,-3.80) .. (b17.south);
  \draw[cg arc, densely dashed] (b7.south) .. controls +(0,-3.15) and +(0,-3.15) .. (b16.south);
  \draw[at arc rev, densely dashed] (b8.south) .. controls +(0,-2.50) and +(0,-2.50) .. (b15.south);
  \draw[at arc rev, densely dashed] (b9.south) .. controls +(0,-1.85) and +(0,-1.85) .. (b14.south);
  \draw[cg arc, densely dashed] (b10.south) .. controls +(0,-1.20) and +(0,-1.20) .. (b13.south);
  \draw[cg arc, densely dashed] (b11.south) .. controls +(0,-0.55) and +(0,-0.55) .. (b12.south);
  \node[rigid node] at (6.0,2.2) {\footnotesize$\widehat{f}$};
  \node[rigid node] at (18.0,2.2) {\footnotesize$\widehat{g}$};
  \node[rigid node, rotate=90] at (11.5,1.2) {\footnotesize interface};
  \node[rigid eq] at (12.5,-3.6) {$\longrightarrow$};
  \node[base node] (r1) at (7.0,-5.2) {A};
  \node[base node] (r2) at (8.0,-5.2) {C};
  \node[base node] (r3) at (9.0,-5.2) {G};
  \node[base node] (r4) at (10.0,-5.2) {C};
  \node[base node] (r5) at (11.0,-5.2) {G};
  \node[base node] (r6) at (12.0,-5.2) {C};
  \node[base node] (r7) at (13.0,-5.2) {G};
  \node[base node] (r8) at (14.0,-5.2) {C};
  \node[base node] (r9) at (15.0,-5.2) {T};
  \node[base node] (r10) at (16.0,-5.2) {A};
  \node[base node] (r11) at (17.0,-5.2) {T};
  \node[base node] (r12) at (18.0,-5.2) {C};
  \draw[cg arc rev] (r2.north) .. controls +(0,1.85) and +(0,1.85) .. (r7.north);
  \draw[cg arc] (r3.north) .. controls +(0,1.20) and +(0,1.20) .. (r6.north);
  \draw[cg arc rev] (r4.north) .. controls +(0,0.55) and +(0,0.55) .. (r5.north);
  \node[rigid node] at (9.0,-5.85) {\footnotesize$x^\vee$};
  \node[rigid node] at (15.0,-5.85) {\footnotesize$z$};
\end{tikzpicture}}
\end{minipage}

\vspace{0.9em}

\begin{minipage}{0.24\textwidth}
  \centering
  \textbf{(b) Folded full process structure}
  \par\smallskip
  \scalebox{1.0}{
\begin{tikzpicture}[x=0.5cm, y=0.5cm]
  \node[base node, font=\tiny\ttfamily] (b0) at (1.305,0.749) {A};
  \node[base node, font=\tiny\ttfamily] (b1) at (1.021,0.164) {C};
  \node[base node, font=\tiny\ttfamily] (b2) at (0.511,-0.238) {G};
  \node[base node, font=\tiny\ttfamily] (b3) at (0.000,-0.641) {C};
  \node[base node, font=\tiny\ttfamily] (b4) at (0.991,-1.897) {G};
  \node[base node, font=\tiny\ttfamily] (b5) at (1.502,-1.495) {C};
  \node[base node, font=\tiny\ttfamily] (b6) at (2.012,-1.092) {G};
  \node[base node, font=\tiny\ttfamily] (b7) at (2.647,-0.953) {A};
  \node[base node, font=\tiny\ttfamily] (b8) at (3.091,-0.478) {A};
  \node[base node, font=\tiny\ttfamily] (b9) at (3.189,0.164) {G};
  \node[base node, font=\tiny\ttfamily] (b10) at (2.905,0.749) {G};
  \draw[-{Stealth[length=1.6pt,width=1.2pt]}, dnaWire, thin] (b0) -- (b1);
  \draw[-{Stealth[length=1.6pt,width=1.2pt]}, dnaWire, thin] (b1) -- (b2);
  \draw[-{Stealth[length=1.6pt,width=1.2pt]}, dnaWire, thin] (b2) -- (b3);
  \draw[-{Stealth[length=1.6pt,width=1.2pt]}, dnaWire, thin] (b3) .. controls (0.113,-1.571) and (0.113,-1.571) .. (b4);
  \draw[-{Stealth[length=1.6pt,width=1.2pt]}, dnaWire, thin] (b4) -- (b5);
  \draw[-{Stealth[length=1.6pt,width=1.2pt]}, dnaWire, thin] (b5) -- (b6);
  \draw[-{Stealth[length=1.6pt,width=1.2pt]}, dnaWire, thin] (b6) -- (b7);
  \draw[-{Stealth[length=1.6pt,width=1.2pt]}, dnaWire, thin] (b7) -- (b8);
  \draw[-{Stealth[length=1.6pt,width=1.2pt]}, dnaWire, thin] (b8) -- (b9);
  \draw[-{Stealth[length=1.6pt,width=1.2pt]}, dnaWire, thin] (b9) -- (b10);
  \draw[dnaCG, thick] (b1) -- (b6);
  \draw[dnaCG, thick] (b2) -- (b5);
  \draw[dnaCG, thick] (b3) -- (b4);
  \node[base node, font=\tiny\ttfamily] (b11) at (5.073,0.749) {C};
  \node[base node, font=\tiny\ttfamily] (b12) at (4.789,0.164) {C};
  \node[base node, font=\tiny\ttfamily] (b13) at (4.887,-0.478) {T};
  \node[base node, font=\tiny\ttfamily] (b14) at (5.331,-0.953) {T};
  \node[base node, font=\tiny\ttfamily] (b15) at (5.314,-1.602) {C};
  \node[base node, font=\tiny\ttfamily] (b16) at (5.088,-2.212) {G};
  \node[base node, font=\tiny\ttfamily] (b17) at (6.588,-2.768) {C};
  \node[base node, font=\tiny\ttfamily] (b18) at (6.814,-2.158) {G};
  \node[base node, font=\tiny\ttfamily] (b19) at (7.224,-1.654) {C};
  \node[base node, font=\tiny\ttfamily] (b20) at (7.242,-1.004) {T};
  \node[base node, font=\tiny\ttfamily] (b21) at (6.859,-0.478) {A};
  \node[base node, font=\tiny\ttfamily] (b22) at (6.957,0.164) {T};
  \node[base node, font=\tiny\ttfamily] (b23) at (6.673,0.749) {C};
  \draw[-{Stealth[length=1.6pt,width=1.2pt]}, dnaWire, thin] (b11) -- (b12);
  \draw[-{Stealth[length=1.6pt,width=1.2pt]}, dnaWire, thin] (b12) -- (b13);
  \draw[-{Stealth[length=1.6pt,width=1.2pt]}, dnaWire, thin] (b13) -- (b14);
  \draw[-{Stealth[length=1.6pt,width=1.2pt]}, dnaWire, thin] (b14) -- (b15);
  \draw[-{Stealth[length=1.6pt,width=1.2pt]}, dnaWire, thin] (b15) -- (b16);
  \draw[-{Stealth[length=1.6pt,width=1.2pt]}, dnaWire, thin] (b16) .. controls (5.669,-2.947) and (5.669,-2.947) .. (b17);
  \draw[-{Stealth[length=1.6pt,width=1.2pt]}, dnaWire, thin] (b17) -- (b18);
  \draw[-{Stealth[length=1.6pt,width=1.2pt]}, dnaWire, thin] (b18) -- (b19);
  \draw[-{Stealth[length=1.6pt,width=1.2pt]}, dnaWire, thin] (b19) -- (b20);
  \draw[-{Stealth[length=1.6pt,width=1.2pt]}, dnaWire, thin] (b20) -- (b21);
  \draw[-{Stealth[length=1.6pt,width=1.2pt]}, dnaWire, thin] (b21) -- (b22);
  \draw[-{Stealth[length=1.6pt,width=1.2pt]}, dnaWire, thin] (b22) -- (b23);
  \draw[dnaAT, thick] (b14) -- (b21);
  \draw[dnaCG, thick] (b15) -- (b18);
  \draw[dnaCG, thick] (b16) -- (b17);
\end{tikzpicture}}
\end{minipage}\hfill
\begin{minipage}{0.24\textwidth}
  \centering
  \textbf{(c) Midway zip (4/6), pre-transfer}
  \par\smallskip
  \scalebox{1.0}{
\begin{tikzpicture}[x=0.5cm, y=0.5cm]
  \node[base node, font=\tiny\ttfamily] (b0) at (-0.800,1.705) {A};
  \node[base node, font=\tiny\ttfamily] (b1) at (-1.332,1.332) {C};
  \node[base node, font=\tiny\ttfamily] (b2) at (-1.943,1.553) {G};
  \node[base node, font=\tiny\ttfamily] (b3) at (-2.555,1.774) {C};
  \node[base node, font=\tiny\ttfamily] (b4) at (-3.098,0.269) {G};
  \node[base node, font=\tiny\ttfamily] (b5) at (-2.487,0.048) {C};
  \node[base node, font=\tiny\ttfamily] (b6) at (-1.876,-0.173) {G};
  \node[base node, font=\tiny\ttfamily] (b7) at (-1.705,-0.800) {A};
  \node[base node, font=\tiny\ttfamily] (b8) at (-2.121,-1.300) {A};
  \node[base node, font=\tiny\ttfamily] (b9) at (-2.537,-1.800) {G};
  \node[base node, font=\tiny\ttfamily] (b10) at (-2.952,-2.299) {G};
  \node[base node, font=\tiny\ttfamily] (b11) at (-1.722,-3.322) {C};
  \node[base node, font=\tiny\ttfamily] (b12) at (-1.306,-2.823) {C};
  \node[base node, font=\tiny\ttfamily] (b13) at (-0.891,-2.323) {T};
  \node[base node, font=\tiny\ttfamily] (b14) at (-0.475,-1.823) {T};
  \node[base node, font=\tiny\ttfamily] (b15) at (0.173,-1.876) {C};
  \node[base node, font=\tiny\ttfamily] (b16) at (0.502,-2.437) {G};
  \node[base node, font=\tiny\ttfamily] (b17) at (1.882,-1.627) {C};
  \node[base node, font=\tiny\ttfamily] (b18) at (1.553,-1.066) {G};
  \node[base node, font=\tiny\ttfamily] (b19) at (1.823,-0.475) {C};
  \node[base node, font=\tiny\ttfamily] (b20) at (1.876,0.173) {T};
  \node[base node, font=\tiny\ttfamily] (b21) at (1.705,0.800) {A};
  \node[base node, font=\tiny\ttfamily] (b22) at (1.332,1.332) {T};
  \node[base node, font=\tiny\ttfamily] (b23) at (0.800,1.705) {C};
  \draw[-{Stealth[length=1.6pt,width=1.2pt]}, dnaWire, thin] (b0) -- (b1);
  \draw[-{Stealth[length=1.6pt,width=1.2pt]}, dnaWire, thin] (b1) -- (b2);
  \draw[-{Stealth[length=1.6pt,width=1.2pt]}, dnaWire, thin] (b2) -- (b3);
  \draw[-{Stealth[length=1.6pt,width=1.2pt]}, dnaWire, thin] (b3) .. controls (-3.285,1.187) and (-3.285,1.187) .. (b4);
  \draw[-{Stealth[length=1.6pt,width=1.2pt]}, dnaWire, thin] (b4) -- (b5);
  \draw[-{Stealth[length=1.6pt,width=1.2pt]}, dnaWire, thin] (b5) -- (b6);
  \draw[-{Stealth[length=1.6pt,width=1.2pt]}, dnaWire, thin] (b6) -- (b7);
  \draw[-{Stealth[length=1.6pt,width=1.2pt]}, dnaWire, thin] (b7) -- (b8);
  \draw[-{Stealth[length=1.6pt,width=1.2pt]}, dnaWire, thin] (b8) -- (b9);
  \draw[-{Stealth[length=1.6pt,width=1.2pt]}, dnaWire, thin] (b9) -- (b10);
  \draw[-{Stealth[length=1.6pt,width=1.2pt]}, dnaWire, thin] (b11) -- (b12);
  \draw[-{Stealth[length=1.6pt,width=1.2pt]}, dnaWire, thin] (b12) -- (b13);
  \draw[-{Stealth[length=1.6pt,width=1.2pt]}, dnaWire, thin] (b13) -- (b14);
  \draw[-{Stealth[length=1.6pt,width=1.2pt]}, dnaWire, thin] (b14) -- (b15);
  \draw[-{Stealth[length=1.6pt,width=1.2pt]}, dnaWire, thin] (b15) -- (b16);
  \draw[-{Stealth[length=1.6pt,width=1.2pt]}, dnaWire, thin] (b16) .. controls (1.438,-2.452) and (1.438,-2.452) .. (b17);
  \draw[-{Stealth[length=1.6pt,width=1.2pt]}, dnaWire, thin] (b17) -- (b18);
  \draw[-{Stealth[length=1.6pt,width=1.2pt]}, dnaWire, thin] (b18) -- (b19);
  \draw[-{Stealth[length=1.6pt,width=1.2pt]}, dnaWire, thin] (b19) -- (b20);
  \draw[-{Stealth[length=1.6pt,width=1.2pt]}, dnaWire, thin] (b20) -- (b21);
  \draw[-{Stealth[length=1.6pt,width=1.2pt]}, dnaWire, thin] (b21) -- (b22);
  \draw[-{Stealth[length=1.6pt,width=1.2pt]}, dnaWire, thin] (b22) -- (b23);
  \draw[dnaCG, thick, opacity=0.25] (b1) -- (b6);
  \draw[dnaCG, thick] (b2) -- (b5);
  \draw[dnaCG, thick] (b3) -- (b4);
  \draw[dnaAT, thick] (b7) -- (b14);
  \draw[dnaAT, thick] (b8) -- (b13);
  \draw[dnaCG, thick] (b9) -- (b12);
  \draw[dnaCG, thick] (b10) -- (b11);
  \draw[dnaCG, thick, opacity=0.25] (b15) -- (b18);
  \draw[dnaCG, thick] (b16) -- (b17);
\end{tikzpicture}}
\end{minipage}\hfill
\begin{minipage}{0.24\textwidth}
  \centering
  \textbf{(d) Transfer moment (5/6)}
  \par\smallskip
  \scalebox{1.0}{
\begin{tikzpicture}[x=0.5cm, y=0.5cm]
  \node[base node, font=\tiny\ttfamily] (b0) at (-0.800,0.878) {A};
  \node[base node, font=\tiny\ttfamily] (b1) at (-1.142,0.325) {C};
  \node[base node, font=\tiny\ttfamily] (b2) at (-1.758,0.532) {G};
  \node[base node, font=\tiny\ttfamily] (b3) at (-2.048,1.113) {C};
  \node[base node, font=\tiny\ttfamily] (b4) at (-3.480,0.400) {G};
  \node[base node, font=\tiny\ttfamily] (b5) at (-3.191,-0.182) {C};
  \node[base node, font=\tiny\ttfamily] (b6) at (-3.397,-0.798) {G};
  \node[base node, font=\tiny\ttfamily] (b7) at (-3.979,-1.087) {A};
  \node[base node, font=\tiny\ttfamily] (b8) at (-4.561,-1.377) {A};
  \node[base node, font=\tiny\ttfamily] (b9) at (-5.143,-1.667) {G};
  \node[base node, font=\tiny\ttfamily] (b10) at (-5.725,-1.957) {G};
  \node[base node, font=\tiny\ttfamily] (b11) at (-5.012,-3.389) {C};
  \node[base node, font=\tiny\ttfamily] (b12) at (-4.430,-3.099) {C};
  \node[base node, font=\tiny\ttfamily] (b13) at (-3.848,-2.809) {T};
  \node[base node, font=\tiny\ttfamily] (b14) at (-3.266,-2.520) {T};
  \node[base node, font=\tiny\ttfamily] (b15) at (-2.684,-2.230) {C};
  \node[base node, font=\tiny\ttfamily] (b16) at (-2.068,-2.437) {G};
  \node[base node, font=\tiny\ttfamily] (b17) at (-0.636,-1.724) {C};
  \node[base node, font=\tiny\ttfamily] (b18) at (-0.429,-1.107) {G};
  \node[base node, font=\tiny\ttfamily] (b19) at (0.218,-1.167) {C};
  \node[base node, font=\tiny\ttfamily] (b20) at (0.800,-0.878) {T};
  \node[base node, font=\tiny\ttfamily] (b21) at (1.142,-0.325) {A};
  \node[base node, font=\tiny\ttfamily] (b22) at (1.142,0.325) {T};
  \node[base node, font=\tiny\ttfamily] (b23) at (0.800,0.878) {C};
  \draw[-{Stealth[length=1.6pt,width=1.2pt]}, dnaWire, thin] (b0) -- (b1);
  \draw[-{Stealth[length=1.6pt,width=1.2pt]}, dnaWire, thin] (b1) -- (b2);
  \draw[-{Stealth[length=1.6pt,width=1.2pt]}, dnaWire, thin] (b2) -- (b3);
  \draw[-{Stealth[length=1.6pt,width=1.2pt]}, dnaWire, thin] (b3) .. controls (-2.982,1.193) and (-2.982,1.193) .. (b4);
  \draw[-{Stealth[length=1.6pt,width=1.2pt]}, dnaWire, thin] (b4) -- (b5);
  \draw[-{Stealth[length=1.6pt,width=1.2pt]}, dnaWire, thin] (b5) -- (b6);
  \draw[-{Stealth[length=1.6pt,width=1.2pt]}, dnaWire, thin] (b6) -- (b7);
  \draw[-{Stealth[length=1.6pt,width=1.2pt]}, dnaWire, thin] (b7) -- (b8);
  \draw[-{Stealth[length=1.6pt,width=1.2pt]}, dnaWire, thin] (b8) -- (b9);
  \draw[-{Stealth[length=1.6pt,width=1.2pt]}, dnaWire, thin] (b9) -- (b10);
  \draw[-{Stealth[length=1.6pt,width=1.2pt]}, dnaWire, thin] (b11) -- (b12);
  \draw[-{Stealth[length=1.6pt,width=1.2pt]}, dnaWire, thin] (b12) -- (b13);
  \draw[-{Stealth[length=1.6pt,width=1.2pt]}, dnaWire, thin] (b13) -- (b14);
  \draw[-{Stealth[length=1.6pt,width=1.2pt]}, dnaWire, thin] (b14) -- (b15);
  \draw[-{Stealth[length=1.6pt,width=1.2pt]}, dnaWire, thin] (b15) -- (b16);
  \draw[-{Stealth[length=1.6pt,width=1.2pt]}, dnaWire, thin] (b16) .. controls (-1.135,-2.517) and (-1.135,-2.517) .. (b17);
  \draw[-{Stealth[length=1.6pt,width=1.2pt]}, dnaWire, thin] (b17) -- (b18);
  \draw[-{Stealth[length=1.6pt,width=1.2pt]}, dnaWire, thin] (b18) -- (b19);
  \draw[-{Stealth[length=1.6pt,width=1.2pt]}, dnaWire, thin] (b19) -- (b20);
  \draw[-{Stealth[length=1.6pt,width=1.2pt]}, dnaWire, thin] (b20) -- (b21);
  \draw[-{Stealth[length=1.6pt,width=1.2pt]}, dnaWire, thin] (b21) -- (b22);
  \draw[-{Stealth[length=1.6pt,width=1.2pt]}, dnaWire, thin] (b22) -- (b23);
  \draw[dnaCG, thick, line width=1.8pt] (b1) -- (b18);
  \draw[dnaCG, thick] (b2) -- (b5);
  \draw[dnaCG, thick] (b3) -- (b4);
  \draw[dnaCG, thick, line width=1.8pt] (b6) -- (b15);
  \draw[dnaAT, thick] (b7) -- (b14);
  \draw[dnaAT, thick] (b8) -- (b13);
  \draw[dnaCG, thick] (b9) -- (b12);
  \draw[dnaCG, thick] (b10) -- (b11);
  \draw[dnaCG, thick] (b16) -- (b17);
\end{tikzpicture}}
\end{minipage}\hfill
\begin{minipage}{0.24\textwidth}
  \centering
  \textbf{(e) Folded transfer-state result}
  \par\smallskip
  \scalebox{1.0}{
\begin{tikzpicture}[x=0.5cm, y=0.5cm]
  \node[base node, font=\tiny\ttfamily] (b0) at (-0.800,0.878) {A};
  \node[base node, font=\tiny\ttfamily] (b1) at (-1.142,0.325) {C};
  \node[base node, font=\tiny\ttfamily] (b2) at (-1.724,0.035) {G};
  \node[base node, font=\tiny\ttfamily] (b3) at (-2.373,0.075) {C};
  \node[base node, font=\tiny\ttfamily] (b4) at (-3.257,-1.258) {G};
  \node[base node, font=\tiny\ttfamily] (b5) at (-2.967,-1.840) {C};
  \node[base node, font=\tiny\ttfamily] (b6) at (-3.007,-2.489) {G};
  \node[base node, font=\tiny\ttfamily] (b7) at (-3.047,-3.138) {A};
  \node[base node, font=\tiny\ttfamily] (b8) at (-3.087,-3.787) {A};
  \node[base node, font=\tiny\ttfamily] (b9) at (-3.127,-4.435) {G};
  \node[base node, font=\tiny\ttfamily] (b10) at (-3.167,-5.084) {G};
  \node[base node, font=\tiny\ttfamily] (b11) at (-1.570,-5.183) {C};
  \node[base node, font=\tiny\ttfamily] (b12) at (-1.530,-4.534) {C};
  \node[base node, font=\tiny\ttfamily] (b13) at (-1.490,-3.885) {T};
  \node[base node, font=\tiny\ttfamily] (b14) at (-1.450,-3.236) {T};
  \node[base node, font=\tiny\ttfamily] (b15) at (-1.410,-2.588) {C};
  \node[base node, font=\tiny\ttfamily] (b16) at (-1.370,-1.939) {G};
  \node[base node, font=\tiny\ttfamily] (b17) at (-1.011,-1.397) {C};
  \node[base node, font=\tiny\ttfamily] (b18) at (-0.429,-1.107) {G};
  \node[base node, font=\tiny\ttfamily] (b19) at (0.218,-1.167) {C};
  \node[base node, font=\tiny\ttfamily] (b20) at (0.800,-0.878) {T};
  \node[base node, font=\tiny\ttfamily] (b21) at (1.142,-0.325) {A};
  \node[base node, font=\tiny\ttfamily] (b22) at (1.142,0.325) {T};
  \node[base node, font=\tiny\ttfamily] (b23) at (0.800,0.878) {C};
  \draw[-{Stealth[length=1.6pt,width=1.2pt]}, dnaWire, thin] (b0) -- (b1);
  \draw[-{Stealth[length=1.6pt,width=1.2pt]}, dnaWire, thin] (b1) -- (b2);
  \draw[-{Stealth[length=1.6pt,width=1.2pt]}, dnaWire, thin] (b2) -- (b3);
  \draw[-{Stealth[length=1.6pt,width=1.2pt]}, dnaWire, thin] (b3) .. controls (-3.221,-0.322) and (-3.221,-0.322) .. (b4);
  \draw[-{Stealth[length=1.6pt,width=1.2pt]}, dnaWire, thin] (b4) -- (b5);
  \draw[-{Stealth[length=1.6pt,width=1.2pt]}, dnaWire, thin] (b5) -- (b6);
  \draw[-{Stealth[length=1.6pt,width=1.2pt]}, dnaWire, thin] (b6) -- (b7);
  \draw[-{Stealth[length=1.6pt,width=1.2pt]}, dnaWire, thin] (b7) -- (b8);
  \draw[-{Stealth[length=1.6pt,width=1.2pt]}, dnaWire, thin] (b8) -- (b9);
  \draw[-{Stealth[length=1.6pt,width=1.2pt]}, dnaWire, thin] (b9) -- (b10);
  \draw[-{Stealth[length=1.6pt,width=1.2pt]}, dnaWire, thin] (b11) -- (b12);
  \draw[-{Stealth[length=1.6pt,width=1.2pt]}, dnaWire, thin] (b12) -- (b13);
  \draw[-{Stealth[length=1.6pt,width=1.2pt]}, dnaWire, thin] (b13) -- (b14);
  \draw[-{Stealth[length=1.6pt,width=1.2pt]}, dnaWire, thin] (b14) -- (b15);
  \draw[-{Stealth[length=1.6pt,width=1.2pt]}, dnaWire, thin] (b15) -- (b16);
  \draw[-{Stealth[length=1.6pt,width=1.2pt]}, dnaWire, thin] (b16) -- (b17);
  \draw[-{Stealth[length=1.6pt,width=1.2pt]}, dnaWire, thin] (b17) -- (b18);
  \draw[-{Stealth[length=1.6pt,width=1.2pt]}, dnaWire, thin] (b18) -- (b19);
  \draw[-{Stealth[length=1.6pt,width=1.2pt]}, dnaWire, thin] (b19) -- (b20);
  \draw[-{Stealth[length=1.6pt,width=1.2pt]}, dnaWire, thin] (b20) -- (b21);
  \draw[-{Stealth[length=1.6pt,width=1.2pt]}, dnaWire, thin] (b21) -- (b22);
  \draw[-{Stealth[length=1.6pt,width=1.2pt]}, dnaWire, thin] (b22) -- (b23);
  \draw[dnaCG, thick] (b1) -- (b18);
  \draw[dnaCG, thick] (b2) -- (b17);
  \draw[dnaCG, thick] (b3) -- (b4);
  \draw[dnaCG, thick] (b5) -- (b16);
  \draw[dnaCG, thick] (b6) -- (b15);
  \draw[dnaAT, thick] (b7) -- (b14);
  \draw[dnaAT, thick] (b8) -- (b13);
  \draw[dnaCG, thick] (b9) -- (b12);
  \draw[dnaCG, thick] (b10) -- (b11);
\end{tikzpicture}}
\end{minipage}
\end{center}

In panels (c)$\to$(d), the highlighted bonds are the same pairings viewed before and after transfer: the bold interface bonds in (c) are rewired into the bold $x^\vee$--$z$ bonds in (d).

Strand displacement~\cite{Yurke2000, Zhang2009, Soloveichik2010} is the best-studied kinetically controlled instance of this mechanism: the interface has the specific structure of a toehold-initiated branch migration, and kinetic control ensures directionality. More broadly, branch migration---the stepwise transfer of base-pair partners along a homologous duplex~\cite{Radding1982}---is the biological process that directly instantiates the transfer operation of our categorical composition. A fully worked specialization with explicit morphisms and diagrams is given in Appendix~\ref{app:tmsd} (Example~\ref{ex:tmsd}); our claim is therefore a mechanistic correspondence at the level of connectivity and design constraints, not a full quantitative model of reaction kinetics or thermodynamic landscapes.

\section{From grammar to DNA}\label{sec:grammar}

The rigid structure of $\Ddna$ matches the categorical backbone of compositional models of natural language.

\subsection{Pregroup grammars and compositional semantics}

A \emph{pregroup}~\cite{Lambek1999} is a partially ordered monoid in which every element $a$ has adjoints $a^l$, $a^r$ satisfying $a^l a \leq 1 \leq a a^l$ and $a a^r \leq 1 \leq a^r a$. A pregroup grammar assigns to each word one or more types from a free pregroup over basic types such as $n$ (noun) and $s$ (sentence); a string is grammatical if the product of its types reduces to $s$. Categorically, a pregroup is a rigid monoidal category that happens to be a poset, with contractions as evaluations and expansions as coevaluations.

Coecke, Sadrzadeh, and Clark~\cite{Coecke2010} observed that this shared rigid structure can be exploited compositionally: a strong monoidal functor $F \colon \mathcal{G} \to \mathcal{S}$ from a grammatical category $\mathcal{G}$ to a semantic target $\mathcal{S}$ transports grammatical reductions into semantic operations. In the original DisCoCat model, $\mathcal{S} = \mathbf{FdVect}$. We propose $\Ddna$ as an alternative target.

\subsection{The functor \texorpdfstring{$F \colon \mathcal{G} \to \Ddna$}{F: G -> D-DNA}}

A strong monoidal functor $F \colon \mathcal{G} \to \Ddna$ is determined by the following data:

\begin{itemize}
  \item \textbf{On basic types.} Each basic grammatical type is assigned a DNA sequence. For instance, one might set $F(n) = N$ and $F(s) = S$ for chosen words $N, S \in \Sig^*$. The choice of sequences is part of the model design and is not determined by the category.

  \item \textbf{On duals.} Since $F$ must respect duality, we have $F(n^r) = F(n)\dv = N\dv$. This is automatic from the monoidal functor requirement.

  \item \textbf{On cups and caps.} The grammatical contraction $n \otimes n^r \leq 1$ maps to the DNA evaluation $\ev_N \colon N \otimes N\dv \to \eps$, which is the canonical Watson--Crick duplex formation. This is the key structural match: \emph{grammatical reduction is base pairing}.

  \item \textbf{On lexical items.} A word of grammatical type $t$ is a state $I \to t$ in the grammatical category (or a lexical extension of it). Under $F$, this becomes a morphism $\eps \to F(t)$---a secondary structure on the DNA sequence $F(t)$. Different words of the same grammatical type correspond to different secondary structures on the same sequence.
\end{itemize}

\begin{example}
Consider the sentence ``Cats chase mice'' with the type assignment
\[
\text{Cats} : n, \qquad \text{chase} : n^r \otimes s \otimes n^l, \qquad \text{mice} : n.
\]
The grammatical reduction contracts $n \otimes n^r$ on the left and $n^l \otimes n$ on the right, yielding the sentence type $s$. Under $F$, this becomes:
\[
\eps \xrightarrow{\; F(\text{Cats}) \otimes F(\text{chase}) \otimes F(\text{mice}) \;}
N \otimes N\dv \otimes S \otimes N\dv \otimes N
\xrightarrow{\; \ev_N \otimes \id_S \otimes \ev_N \;}
S.
\]
The intermediate step---the application of the evaluation maps---performs Watson--Crick pairing between the adjacent complementary segments $N$ and $N\dv$ on each side. In this concrete instance, one transferred arc from the left interface and one from the right interface land fully on $S$. The resulting secondary structure on $S$ is the ``meaning'' of the sentence in the DNA semantics.
\end{example}

\begin{center}
\begin{tikzpicture}[x=0.7cm, y=0.7cm]

  \node[rigid node] at (1.5,0) {$N$};
  \node[rigid node] at (3,0) {$N^\vee$};
  \node[rigid node] at (4.5,0) {$S$};
  \node[rigid node] at (6,0) {$N^\vee$};
  \node[rigid node] at (7.5,0) {$N$};

  \node[rigid node] at (1.5,0.8) {\footnotesize Cats};
  \node[rigid node] at (4.5,0.8) {\footnotesize chase};
  \node[rigid node] at (7.5,0.8) {\footnotesize mice};

  \draw[dna wire plain, thick] (1.5,-0.35) .. controls +(0,-1.5) and +(0,-1.5) .. (3,-0.35);
  \node[rigid node] at (2.25,-2.1) {\scriptsize$\ev_N$};

  \draw[dna wire plain, thick] (6,-0.35) .. controls +(0,-1.5) and +(0,-1.5) .. (7.5,-0.35);
  \node[rigid node] at (6.75,-2.1) {\scriptsize$\ev_N$};

  \draw[dna wire, thick] (4.5,-0.35) -- (4.5,-2.8);
  \node[rigid node, below] at (4.5,-2.8) {$S$};

  \draw[dnaBorder, thin, dashed] (2.25,-0.5) -- (2.25,0.5);
  \draw[dnaBorder, thin, dashed] (6.75,-0.5) -- (6.75,0.5);
\end{tikzpicture}
\end{center}

\noindent
Concretely, let $N$ and $S$ be fixed DNA words of length 12. In this example, the last three positions of \emph{cats} and \emph{chase} are free, as are the first three positions of \emph{chase} and \emph{mice}; each lexical structure is non-trivial, maximally bound, and has no hairpins of size less than 3. The functor images of the lexical items are drawn as solid arcs; the dashed cups perform Watson--Crick evaluation on each $N$--$N\dv$ interface. The transfer result on $S$ contains at least one fully transferred arc from the right side, encoding the sentence meaning as a secondary structure on $S$.

\begin{center}
\begin{minipage}{\textwidth}
  \centering
  \textbf{(a) Arc-evaluation diagram}
  \par\smallskip
  \resizebox{0.98\textwidth}{!}{
\begin{tikzpicture}[x=0.38cm, y=0.38cm]
  \node[base node, font=\scriptsize\ttfamily] (b0) at (1,0) {A};
  \node[base node, font=\scriptsize\ttfamily] (b1) at (2,0) {G};
  \node[base node, font=\scriptsize\ttfamily] (b2) at (3,0) {G};
  \node[base node, font=\scriptsize\ttfamily] (b3) at (4,0) {A};
  \node[base node, font=\scriptsize\ttfamily] (b4) at (5,0) {A};
  \node[base node, font=\scriptsize\ttfamily] (b5) at (6,0) {C};
  \node[base node, font=\scriptsize\ttfamily] (b6) at (7,0) {T};
  \node[base node, font=\scriptsize\ttfamily] (b7) at (8,0) {G};
  \node[base node, font=\scriptsize\ttfamily] (b8) at (9,0) {G};
  \node[base node, font=\scriptsize\ttfamily] (b9) at (10,0) {A};
  \node[base node, font=\scriptsize\ttfamily] (b10) at (11,0) {A};
  \node[base node, font=\scriptsize\ttfamily] (b11) at (12,0) {G};
  \node[base node, font=\scriptsize\ttfamily] (b12) at (13,0) {C};
  \node[base node, font=\scriptsize\ttfamily] (b13) at (14,0) {T};
  \node[base node, font=\scriptsize\ttfamily] (b14) at (15,0) {T};
  \node[base node, font=\scriptsize\ttfamily] (b15) at (16,0) {C};
  \node[base node, font=\scriptsize\ttfamily] (b16) at (17,0) {C};
  \node[base node, font=\scriptsize\ttfamily] (b17) at (18,0) {A};
  \node[base node, font=\scriptsize\ttfamily] (b18) at (19,0) {G};
  \node[base node, font=\scriptsize\ttfamily] (b19) at (20,0) {T};
  \node[base node, font=\scriptsize\ttfamily] (b20) at (21,0) {T};
  \node[base node, font=\scriptsize\ttfamily] (b21) at (22,0) {C};
  \node[base node, font=\scriptsize\ttfamily] (b22) at (23,0) {C};
  \node[base node, font=\scriptsize\ttfamily] (b23) at (24,0) {T};
  \node[base node, font=\scriptsize\ttfamily] (b24) at (25,0) {G};
  \node[base node, font=\scriptsize\ttfamily] (b25) at (26,0) {C};
  \node[base node, font=\scriptsize\ttfamily] (b26) at (27,0) {T};
  \node[base node, font=\scriptsize\ttfamily] (b27) at (28,0) {A};
  \node[base node, font=\scriptsize\ttfamily] (b28) at (29,0) {G};
  \node[base node, font=\scriptsize\ttfamily] (b29) at (30,0) {C};
  \node[base node, font=\scriptsize\ttfamily] (b30) at (31,0) {A};
  \node[base node, font=\scriptsize\ttfamily] (b31) at (32,0) {T};
  \node[base node, font=\scriptsize\ttfamily] (b32) at (33,0) {C};
  \node[base node, font=\scriptsize\ttfamily] (b33) at (34,0) {G};
  \node[base node, font=\scriptsize\ttfamily] (b34) at (35,0) {A};
  \node[base node, font=\scriptsize\ttfamily] (b35) at (36,0) {T};
  \node[base node, font=\scriptsize\ttfamily] (b36) at (37,0) {C};
  \node[base node, font=\scriptsize\ttfamily] (b37) at (38,0) {T};
  \node[base node, font=\scriptsize\ttfamily] (b38) at (39,0) {T};
  \node[base node, font=\scriptsize\ttfamily] (b39) at (40,0) {C};
  \node[base node, font=\scriptsize\ttfamily] (b40) at (41,0) {C};
  \node[base node, font=\scriptsize\ttfamily] (b41) at (42,0) {A};
  \node[base node, font=\scriptsize\ttfamily] (b42) at (43,0) {G};
  \node[base node, font=\scriptsize\ttfamily] (b43) at (44,0) {T};
  \node[base node, font=\scriptsize\ttfamily] (b44) at (45,0) {T};
  \node[base node, font=\scriptsize\ttfamily] (b45) at (46,0) {C};
  \node[base node, font=\scriptsize\ttfamily] (b46) at (47,0) {C};
  \node[base node, font=\scriptsize\ttfamily] (b47) at (48,0) {T};
  \node[base node, font=\scriptsize\ttfamily] (b48) at (49,0) {A};
  \node[base node, font=\scriptsize\ttfamily] (b49) at (50,0) {G};
  \node[base node, font=\scriptsize\ttfamily] (b50) at (51,0) {G};
  \node[base node, font=\scriptsize\ttfamily] (b51) at (52,0) {A};
  \node[base node, font=\scriptsize\ttfamily] (b52) at (53,0) {A};
  \node[base node, font=\scriptsize\ttfamily] (b53) at (54,0) {C};
  \node[base node, font=\scriptsize\ttfamily] (b54) at (55,0) {T};
  \node[base node, font=\scriptsize\ttfamily] (b55) at (56,0) {G};
  \node[base node, font=\scriptsize\ttfamily] (b56) at (57,0) {G};
  \node[base node, font=\scriptsize\ttfamily] (b57) at (58,0) {A};
  \node[base node, font=\scriptsize\ttfamily] (b58) at (59,0) {A};
  \node[base node, font=\scriptsize\ttfamily] (b59) at (60,0) {G};
  \node[rigid node] at (6.5,-0.7) {\normalsize$N$};
  \node[rigid node] at (18.5,-0.7) {\normalsize$N^\vee$};
  \node[rigid node] at (30.5,-0.7) {\normalsize$S$};
  \node[rigid node] at (42.5,-0.7) {\normalsize$N^\vee$};
  \node[rigid node] at (54.5,-0.7) {\normalsize$N$};
  \node[rigid node] at (6.5,1.0) {\normalsize Cats};
  \node[rigid node] at (30.5,1.0) {\normalsize chase};
  \node[rigid node] at (54.5,1.0) {\normalsize mice};
  \draw[dnaBorder, thin, dashed] (12.5,-0.5) -- (12.5,0.5);
  \draw[dnaBorder, thin, dashed] (48.5,-0.5) -- (48.5,0.5);
  \draw[at arc rev] (b0.north) .. controls +(0,1.20) and +(0,1.20) .. (b6.north);
  \draw[cg arc] (b1.north) .. controls +(0,0.55) and +(0,0.55) .. (b5.north);
  \draw[cg arc rev] (b16.north) .. controls +(0,1.85) and +(0,1.85) .. (b24.north);
  \draw[at arc rev] (b17.north) .. controls +(0,1.20) and +(0,1.20) .. (b23.north);
  \draw[cg arc] (b18.north) .. controls +(0,0.55) and +(0,0.55) .. (b22.north);
  \draw[cg arc rev] (b25.north) .. controls +(0,2.50) and +(0,2.50) .. (b42.north);
  \draw[at arc] (b26.north) .. controls +(0,1.85) and +(0,1.85) .. (b41.north);
  \draw[at arc rev] (b27.north) .. controls +(0,1.20) and +(0,1.20) .. (b37.north);
  \draw[cg arc] (b28.north) .. controls +(0,0.55) and +(0,0.55) .. (b32.north);
  \draw[cg arc rev] (b53.north) .. controls +(0,1.20) and +(0,1.20) .. (b59.north);
  \draw[at arc] (b54.north) .. controls +(0,0.55) and +(0,0.55) .. (b58.north);
  \draw[at arc, densely dashed] (b0.south) .. controls +(0,-7.70) and +(0,-7.70) .. (b23.south);
  \draw[cg arc rev, densely dashed] (b1.south) .. controls +(0,-7.05) and +(0,-7.05) .. (b22.south);
  \draw[cg arc rev, densely dashed] (b2.south) .. controls +(0,-6.40) and +(0,-6.40) .. (b21.south);
  \draw[at arc, densely dashed] (b3.south) .. controls +(0,-5.75) and +(0,-5.75) .. (b20.south);
  \draw[at arc, densely dashed] (b4.south) .. controls +(0,-5.10) and +(0,-5.10) .. (b19.south);
  \draw[cg arc, densely dashed] (b5.south) .. controls +(0,-4.45) and +(0,-4.45) .. (b18.south);
  \draw[at arc rev, densely dashed] (b6.south) .. controls +(0,-3.80) and +(0,-3.80) .. (b17.south);
  \draw[cg arc rev, densely dashed] (b7.south) .. controls +(0,-3.15) and +(0,-3.15) .. (b16.south);
  \draw[cg arc rev, densely dashed] (b8.south) .. controls +(0,-2.50) and +(0,-2.50) .. (b15.south);
  \draw[at arc, densely dashed] (b9.south) .. controls +(0,-1.85) and +(0,-1.85) .. (b14.south);
  \draw[at arc, densely dashed] (b10.south) .. controls +(0,-1.20) and +(0,-1.20) .. (b13.south);
  \draw[cg arc rev, densely dashed] (b11.south) .. controls +(0,-0.55) and +(0,-0.55) .. (b12.south);
  \draw[cg arc, densely dashed] (b36.south) .. controls +(0,-7.70) and +(0,-7.70) .. (b59.south);
  \draw[at arc rev, densely dashed] (b37.south) .. controls +(0,-7.05) and +(0,-7.05) .. (b58.south);
  \draw[at arc rev, densely dashed] (b38.south) .. controls +(0,-6.40) and +(0,-6.40) .. (b57.south);
  \draw[cg arc, densely dashed] (b39.south) .. controls +(0,-5.75) and +(0,-5.75) .. (b56.south);
  \draw[cg arc, densely dashed] (b40.south) .. controls +(0,-5.10) and +(0,-5.10) .. (b55.south);
  \draw[at arc, densely dashed] (b41.south) .. controls +(0,-4.45) and +(0,-4.45) .. (b54.south);
  \draw[cg arc rev, densely dashed] (b42.south) .. controls +(0,-3.80) and +(0,-3.80) .. (b53.south);
  \draw[at arc rev, densely dashed] (b43.south) .. controls +(0,-3.15) and +(0,-3.15) .. (b52.south);
  \draw[at arc rev, densely dashed] (b44.south) .. controls +(0,-2.50) and +(0,-2.50) .. (b51.south);
  \draw[cg arc, densely dashed] (b45.south) .. controls +(0,-1.85) and +(0,-1.85) .. (b50.south);
  \draw[cg arc, densely dashed] (b46.south) .. controls +(0,-1.20) and +(0,-1.20) .. (b49.south);
  \draw[at arc rev, densely dashed] (b47.south) .. controls +(0,-0.55) and +(0,-0.55) .. (b48.south);
  \node[rigid node] at (30.5,-2.0) {\normalsize$\id_S$};
  \node[rigid node] at (12.5,-9.2) {\normalsize$\ev_N$};
  \node[rigid node] at (48.5,-9.2) {\normalsize$\ev_N$};
  \node[rigid eq] at (30.5,-10.2) {$\longrightarrow$};
  \node[base node, font=\scriptsize\ttfamily] (r0) at (25.0,-12.7) {G};
  \node[base node, font=\scriptsize\ttfamily] (r1) at (26.0,-12.7) {C};
  \node[base node, font=\scriptsize\ttfamily] (r2) at (27.0,-12.7) {T};
  \node[base node, font=\scriptsize\ttfamily] (r3) at (28.0,-12.7) {A};
  \node[base node, font=\scriptsize\ttfamily] (r4) at (29.0,-12.7) {G};
  \node[base node, font=\scriptsize\ttfamily] (r5) at (30.0,-12.7) {C};
  \node[base node, font=\scriptsize\ttfamily] (r6) at (31.0,-12.7) {A};
  \node[base node, font=\scriptsize\ttfamily] (r7) at (32.0,-12.7) {T};
  \node[base node, font=\scriptsize\ttfamily] (r8) at (33.0,-12.7) {C};
  \node[base node, font=\scriptsize\ttfamily] (r9) at (34.0,-12.7) {G};
  \node[base node, font=\scriptsize\ttfamily] (r10) at (35.0,-12.7) {A};
  \node[base node, font=\scriptsize\ttfamily] (r11) at (36.0,-12.7) {T};
  \node[rigid node] at (30.5,-13.4) {\normalsize$S$};
  \draw[at arc] (r2.north) .. controls +(0,0.55) and +(0,0.55) .. (r3.north);
  \draw[cg arc] (r4.north) .. controls +(0,0.55) and +(0,0.55) .. (r8.north);
\end{tikzpicture}}
\end{minipage}

\vspace{0.9em}

\begin{minipage}{0.49\textwidth}
  \centering
  \textbf{(b) Folded full sentence structure}
  \par\smallskip
\begin{tikzpicture}[x=0.36cm, y=0.36cm]
  \node[base node, font=\tiny\ttfamily] (b0) at (1.474,0.749) {A};
  \node[base node, font=\tiny\ttfamily] (b1) at (0.825,0.707) {G};
  \node[base node, font=\tiny\ttfamily] (b2) at (0.224,0.461) {G};
  \node[base node, font=\tiny\ttfamily] (b3) at (0.000,-0.149) {A};
  \node[base node, font=\tiny\ttfamily] (b4) at (0.301,-0.725) {A};
  \node[base node, font=\tiny\ttfamily] (b5) at (0.930,-0.890) {C};
  \node[base node, font=\tiny\ttfamily] (b6) at (1.579,-0.847) {T};
  \node[base node, font=\tiny\ttfamily] (b7) at (2.181,-1.092) {G};
  \node[base node, font=\tiny\ttfamily] (b8) at (2.816,-0.953) {G};
  \node[base node, font=\tiny\ttfamily] (b9) at (3.260,-0.478) {A};
  \node[base node, font=\tiny\ttfamily] (b10) at (3.358,0.164) {A};
  \node[base node, font=\tiny\ttfamily] (b11) at (3.074,0.749) {G};
  \draw[-{Stealth[length=1.6pt,width=1.2pt]}, dnaWire, thin] (b0) -- (b1);
  \draw[-{Stealth[length=1.6pt,width=1.2pt]}, dnaWire, thin] (b1) -- (b2);
  \draw[-{Stealth[length=1.6pt,width=1.2pt]}, dnaWire, thin] (b2) -- (b3);
  \draw[-{Stealth[length=1.6pt,width=1.2pt]}, dnaWire, thin] (b3) -- (b4);
  \draw[-{Stealth[length=1.6pt,width=1.2pt]}, dnaWire, thin] (b4) -- (b5);
  \draw[-{Stealth[length=1.6pt,width=1.2pt]}, dnaWire, thin] (b5) -- (b6);
  \draw[-{Stealth[length=1.6pt,width=1.2pt]}, dnaWire, thin] (b6) -- (b7);
  \draw[-{Stealth[length=1.6pt,width=1.2pt]}, dnaWire, thin] (b7) -- (b8);
  \draw[-{Stealth[length=1.6pt,width=1.2pt]}, dnaWire, thin] (b8) -- (b9);
  \draw[-{Stealth[length=1.6pt,width=1.2pt]}, dnaWire, thin] (b9) -- (b10);
  \draw[-{Stealth[length=1.6pt,width=1.2pt]}, dnaWire, thin] (b10) -- (b11);
  \draw[dnaAT, thick] (b0) -- (b6);
  \draw[dnaCG, thick] (b1) -- (b5);
  \node[base node, font=\tiny\ttfamily] (b12) at (7.147,1.647) {C};
  \node[base node, font=\tiny\ttfamily] (b13) at (6.622,1.263) {T};
  \node[base node, font=\tiny\ttfamily] (b14) at (6.264,0.721) {T};
  \node[base node, font=\tiny\ttfamily] (b15) at (6.118,0.087) {C};
  \node[base node, font=\tiny\ttfamily] (b16) at (6.203,-0.557) {C};
  \node[base node, font=\tiny\ttfamily] (b17) at (5.733,-1.006) {A};
  \node[base node, font=\tiny\ttfamily] (b18) at (5.262,-1.454) {G};
  \node[base node, font=\tiny\ttfamily] (b19) at (4.958,-2.029) {T};
  \node[base node, font=\tiny\ttfamily] (b20) at (5.178,-2.640) {T};
  \node[base node, font=\tiny\ttfamily] (b21) at (5.778,-2.889) {C};
  \node[base node, font=\tiny\ttfamily] (b22) at (6.366,-2.612) {C};
  \node[base node, font=\tiny\ttfamily] (b23) at (6.837,-2.164) {T};
  \node[base node, font=\tiny\ttfamily] (b24) at (7.307,-1.715) {G};
  \node[base node, font=\tiny\ttfamily] (b25) at (7.947,-1.831) {C};
  \node[base node, font=\tiny\ttfamily] (b26) at (8.231,-2.415) {T};
  \node[base node, font=\tiny\ttfamily] (b27) at (8.232,-3.065) {A};
  \node[base node, font=\tiny\ttfamily] (b28) at (7.683,-3.414) {G};
  \node[base node, font=\tiny\ttfamily] (b29) at (7.068,-3.626) {C};
  \node[base node, font=\tiny\ttfamily] (b30) at (6.812,-4.223) {A};
  \node[base node, font=\tiny\ttfamily] (b31) at (7.082,-4.815) {T};
  \node[base node, font=\tiny\ttfamily] (b32) at (7.700,-5.013) {C};
  \node[base node, font=\tiny\ttfamily] (b33) at (8.257,-5.350) {G};
  \node[base node, font=\tiny\ttfamily] (b34) at (8.907,-5.342) {A};
  \node[base node, font=\tiny\ttfamily] (b35) at (9.456,-4.994) {T};
  \node[base node, font=\tiny\ttfamily] (b36) at (9.739,-4.409) {C};
  \node[base node, font=\tiny\ttfamily] (b37) at (9.672,-3.763) {T};
  \node[base node, font=\tiny\ttfamily] (b38) at (10.182,-3.360) {T};
  \node[base node, font=\tiny\ttfamily] (b39) at (10.374,-2.739) {C};
  \node[base node, font=\tiny\ttfamily] (b40) at (10.181,-2.118) {C};
  \node[base node, font=\tiny\ttfamily] (b41) at (9.670,-1.716) {A};
  \node[base node, font=\tiny\ttfamily] (b42) at (9.386,-1.131) {G};
  \node[base node, font=\tiny\ttfamily] (b43) at (9.691,-0.557) {T};
  \node[base node, font=\tiny\ttfamily] (b44) at (9.776,0.087) {T};
  \node[base node, font=\tiny\ttfamily] (b45) at (9.630,0.721) {C};
  \node[base node, font=\tiny\ttfamily] (b46) at (9.272,1.263) {C};
  \node[base node, font=\tiny\ttfamily] (b47) at (8.747,1.647) {T};
  \draw[-{Stealth[length=1.6pt,width=1.2pt]}, dnaWire, thin] (b12) -- (b13);
  \draw[-{Stealth[length=1.6pt,width=1.2pt]}, dnaWire, thin] (b13) -- (b14);
  \draw[-{Stealth[length=1.6pt,width=1.2pt]}, dnaWire, thin] (b14) -- (b15);
  \draw[-{Stealth[length=1.6pt,width=1.2pt]}, dnaWire, thin] (b15) -- (b16);
  \draw[-{Stealth[length=1.6pt,width=1.2pt]}, dnaWire, thin] (b16) -- (b17);
  \draw[-{Stealth[length=1.6pt,width=1.2pt]}, dnaWire, thin] (b17) -- (b18);
  \draw[-{Stealth[length=1.6pt,width=1.2pt]}, dnaWire, thin] (b18) -- (b19);
  \draw[-{Stealth[length=1.6pt,width=1.2pt]}, dnaWire, thin] (b19) -- (b20);
  \draw[-{Stealth[length=1.6pt,width=1.2pt]}, dnaWire, thin] (b20) -- (b21);
  \draw[-{Stealth[length=1.6pt,width=1.2pt]}, dnaWire, thin] (b21) -- (b22);
  \draw[-{Stealth[length=1.6pt,width=1.2pt]}, dnaWire, thin] (b22) -- (b23);
  \draw[-{Stealth[length=1.6pt,width=1.2pt]}, dnaWire, thin] (b23) -- (b24);
  \draw[-{Stealth[length=1.6pt,width=1.2pt]}, dnaWire, thin] (b24) -- (b25);
  \draw[-{Stealth[length=1.6pt,width=1.2pt]}, dnaWire, thin] (b25) -- (b26);
  \draw[-{Stealth[length=1.6pt,width=1.2pt]}, dnaWire, thin] (b26) -- (b27);
  \draw[-{Stealth[length=1.6pt,width=1.2pt]}, dnaWire, thin] (b27) -- (b28);
  \draw[-{Stealth[length=1.6pt,width=1.2pt]}, dnaWire, thin] (b28) -- (b29);
  \draw[-{Stealth[length=1.6pt,width=1.2pt]}, dnaWire, thin] (b29) -- (b30);
  \draw[-{Stealth[length=1.6pt,width=1.2pt]}, dnaWire, thin] (b30) -- (b31);
  \draw[-{Stealth[length=1.6pt,width=1.2pt]}, dnaWire, thin] (b31) -- (b32);
  \draw[-{Stealth[length=1.6pt,width=1.2pt]}, dnaWire, thin] (b32) -- (b33);
  \draw[-{Stealth[length=1.6pt,width=1.2pt]}, dnaWire, thin] (b33) -- (b34);
  \draw[-{Stealth[length=1.6pt,width=1.2pt]}, dnaWire, thin] (b34) -- (b35);
  \draw[-{Stealth[length=1.6pt,width=1.2pt]}, dnaWire, thin] (b35) -- (b36);
  \draw[-{Stealth[length=1.6pt,width=1.2pt]}, dnaWire, thin] (b36) -- (b37);
  \draw[-{Stealth[length=1.6pt,width=1.2pt]}, dnaWire, thin] (b37) -- (b38);
  \draw[-{Stealth[length=1.6pt,width=1.2pt]}, dnaWire, thin] (b38) -- (b39);
  \draw[-{Stealth[length=1.6pt,width=1.2pt]}, dnaWire, thin] (b39) -- (b40);
  \draw[-{Stealth[length=1.6pt,width=1.2pt]}, dnaWire, thin] (b40) -- (b41);
  \draw[-{Stealth[length=1.6pt,width=1.2pt]}, dnaWire, thin] (b41) -- (b42);
  \draw[-{Stealth[length=1.6pt,width=1.2pt]}, dnaWire, thin] (b42) -- (b43);
  \draw[-{Stealth[length=1.6pt,width=1.2pt]}, dnaWire, thin] (b43) -- (b44);
  \draw[-{Stealth[length=1.6pt,width=1.2pt]}, dnaWire, thin] (b44) -- (b45);
  \draw[-{Stealth[length=1.6pt,width=1.2pt]}, dnaWire, thin] (b45) -- (b46);
  \draw[-{Stealth[length=1.6pt,width=1.2pt]}, dnaWire, thin] (b46) -- (b47);
  \draw[dnaCG, thick] (b16) -- (b24);
  \draw[dnaAT, thick] (b17) -- (b23);
  \draw[dnaCG, thick] (b18) -- (b22);
  \draw[dnaCG, thick] (b25) -- (b42);
  \draw[dnaAT, thick] (b26) -- (b41);
  \draw[dnaAT, thick] (b27) -- (b37);
  \draw[dnaCG, thick] (b28) -- (b32);
  \node[base node, font=\tiny\ttfamily] (b48) at (12.258,0.749) {A};
  \node[base node, font=\tiny\ttfamily] (b49) at (11.974,0.164) {G};
  \node[base node, font=\tiny\ttfamily] (b50) at (12.072,-0.478) {G};
  \node[base node, font=\tiny\ttfamily] (b51) at (12.516,-0.953) {A};
  \node[base node, font=\tiny\ttfamily] (b52) at (13.151,-1.092) {A};
  \node[base node, font=\tiny\ttfamily] (b53) at (13.753,-0.847) {C};
  \node[base node, font=\tiny\ttfamily] (b54) at (14.402,-0.890) {T};
  \node[base node, font=\tiny\ttfamily] (b55) at (15.031,-0.725) {G};
  \node[base node, font=\tiny\ttfamily] (b56) at (15.332,-0.149) {G};
  \node[base node, font=\tiny\ttfamily] (b57) at (15.108,0.461) {A};
  \node[base node, font=\tiny\ttfamily] (b58) at (14.507,0.707) {A};
  \node[base node, font=\tiny\ttfamily] (b59) at (13.858,0.749) {G};
  \draw[-{Stealth[length=1.6pt,width=1.2pt]}, dnaWire, thin] (b48) -- (b49);
  \draw[-{Stealth[length=1.6pt,width=1.2pt]}, dnaWire, thin] (b49) -- (b50);
  \draw[-{Stealth[length=1.6pt,width=1.2pt]}, dnaWire, thin] (b50) -- (b51);
  \draw[-{Stealth[length=1.6pt,width=1.2pt]}, dnaWire, thin] (b51) -- (b52);
  \draw[-{Stealth[length=1.6pt,width=1.2pt]}, dnaWire, thin] (b52) -- (b53);
  \draw[-{Stealth[length=1.6pt,width=1.2pt]}, dnaWire, thin] (b53) -- (b54);
  \draw[-{Stealth[length=1.6pt,width=1.2pt]}, dnaWire, thin] (b54) -- (b55);
  \draw[-{Stealth[length=1.6pt,width=1.2pt]}, dnaWire, thin] (b55) -- (b56);
  \draw[-{Stealth[length=1.6pt,width=1.2pt]}, dnaWire, thin] (b56) -- (b57);
  \draw[-{Stealth[length=1.6pt,width=1.2pt]}, dnaWire, thin] (b57) -- (b58);
  \draw[-{Stealth[length=1.6pt,width=1.2pt]}, dnaWire, thin] (b58) -- (b59);
  \draw[dnaCG, thick] (b53) -- (b59);
  \draw[dnaAT, thick] (b54) -- (b58);
  \node[rigid node, font=\small] at ($(b6)+(0.0,-1.8)$) {cats};
  \node[rigid node, font=\small] at ($(b30)+(0.0,-2.0)$) {chase};
  \node[rigid node, font=\small] at ($(b54)+(0.0,-1.8)$) {mice};
\end{tikzpicture}
\end{minipage}\hfill
\begin{minipage}{0.49\textwidth}
  \centering
  \textbf{(c) Full folded transfer state on the sentence sequence}
  \par\smallskip
\begin{tikzpicture}[x=0.36cm, y=0.36cm]
  \node[base node, font=\tiny\ttfamily] (b0) at (-0.800,1.980) {A};
  \node[base node, font=\tiny\ttfamily] (b1) at (-1.251,2.448) {G};
  \node[base node, font=\tiny\ttfamily] (b2) at (-1.703,2.916) {G};
  \node[base node, font=\tiny\ttfamily] (b3) at (-2.154,3.383) {A};
  \node[base node, font=\tiny\ttfamily] (b4) at (-2.606,3.851) {A};
  \node[base node, font=\tiny\ttfamily] (b5) at (-3.057,4.319) {C};
  \node[base node, font=\tiny\ttfamily] (b6) at (-3.509,4.786) {T};
  \node[base node, font=\tiny\ttfamily] (b7) at (-3.960,5.254) {G};
  \node[base node, font=\tiny\ttfamily] (b8) at (-4.412,5.721) {G};
  \node[base node, font=\tiny\ttfamily] (b9) at (-4.863,6.189) {A};
  \node[base node, font=\tiny\ttfamily] (b10) at (-5.315,6.657) {A};
  \node[base node, font=\tiny\ttfamily] (b11) at (-5.766,7.124) {G};
  \node[base node, font=\tiny\ttfamily] (b12) at (-6.917,6.013) {C};
  \node[base node, font=\tiny\ttfamily] (b13) at (-6.466,5.545) {T};
  \node[base node, font=\tiny\ttfamily] (b14) at (-6.014,5.078) {T};
  \node[base node, font=\tiny\ttfamily] (b15) at (-5.563,4.610) {C};
  \node[base node, font=\tiny\ttfamily] (b16) at (-5.111,4.142) {C};
  \node[base node, font=\tiny\ttfamily] (b17) at (-4.660,3.675) {A};
  \node[base node, font=\tiny\ttfamily] (b18) at (-4.208,3.207) {G};
  \node[base node, font=\tiny\ttfamily] (b19) at (-3.757,2.740) {T};
  \node[base node, font=\tiny\ttfamily] (b20) at (-3.305,2.272) {T};
  \node[base node, font=\tiny\ttfamily] (b21) at (-2.854,1.804) {C};
  \node[base node, font=\tiny\ttfamily] (b22) at (-2.403,1.337) {C};
  \node[base node, font=\tiny\ttfamily] (b23) at (-1.951,0.869) {T};
  \node[base node, font=\tiny\ttfamily] (b24) at (-2.122,0.242) {G};
  \node[base node, font=\tiny\ttfamily] (b25) at (-2.097,-0.407) {C};
  \node[base node, font=\tiny\ttfamily] (b26) at (-1.877,-1.019) {T};
  \node[base node, font=\tiny\ttfamily] (b27) at (-0.642,-2.037) {A};
  \node[base node, font=\tiny\ttfamily] (b28) at (0.000,-2.136) {G};
  \node[base node, font=\tiny\ttfamily] (b29) at (0.422,-2.631) {C};
  \node[base node, font=\tiny\ttfamily] (b30) at (1.071,-2.652) {A};
  \node[base node, font=\tiny\ttfamily] (b31) at (1.524,-2.185) {T};
  \node[base node, font=\tiny\ttfamily] (b32) at (1.484,-1.537) {C};
  \node[base node, font=\tiny\ttfamily] (b33) at (1.877,-1.019) {G};
  \node[base node, font=\tiny\ttfamily] (b34) at (2.097,-0.407) {A};
  \node[base node, font=\tiny\ttfamily] (b35) at (2.122,0.242) {T};
  \node[base node, font=\tiny\ttfamily] (b36) at (1.951,0.869) {C};
  \node[base node, font=\tiny\ttfamily] (b37) at (2.403,1.337) {T};
  \node[base node, font=\tiny\ttfamily] (b38) at (2.854,1.804) {T};
  \node[base node, font=\tiny\ttfamily] (b39) at (3.305,2.272) {C};
  \node[base node, font=\tiny\ttfamily] (b40) at (3.757,2.740) {C};
  \node[base node, font=\tiny\ttfamily] (b41) at (4.208,3.207) {A};
  \node[base node, font=\tiny\ttfamily] (b42) at (4.660,3.675) {G};
  \node[base node, font=\tiny\ttfamily] (b43) at (5.111,4.142) {T};
  \node[base node, font=\tiny\ttfamily] (b44) at (5.563,4.610) {T};
  \node[base node, font=\tiny\ttfamily] (b45) at (6.014,5.078) {C};
  \node[base node, font=\tiny\ttfamily] (b46) at (6.466,5.545) {C};
  \node[base node, font=\tiny\ttfamily] (b47) at (6.917,6.013) {T};
  \node[base node, font=\tiny\ttfamily] (b48) at (5.766,7.124) {A};
  \node[base node, font=\tiny\ttfamily] (b49) at (5.315,6.657) {G};
  \node[base node, font=\tiny\ttfamily] (b50) at (4.863,6.189) {G};
  \node[base node, font=\tiny\ttfamily] (b51) at (4.412,5.721) {A};
  \node[base node, font=\tiny\ttfamily] (b52) at (3.960,5.254) {A};
  \node[base node, font=\tiny\ttfamily] (b53) at (3.509,4.786) {C};
  \node[base node, font=\tiny\ttfamily] (b54) at (3.057,4.319) {T};
  \node[base node, font=\tiny\ttfamily] (b55) at (2.606,3.851) {G};
  \node[base node, font=\tiny\ttfamily] (b56) at (2.154,3.383) {G};
  \node[base node, font=\tiny\ttfamily] (b57) at (1.703,2.916) {A};
  \node[base node, font=\tiny\ttfamily] (b58) at (1.251,2.448) {A};
  \node[base node, font=\tiny\ttfamily] (b59) at (0.800,1.980) {G};
  \draw[-{Stealth[length=1.6pt,width=1.2pt]}, dnaWire, thin] (b0) -- (b1);
  \draw[-{Stealth[length=1.6pt,width=1.2pt]}, dnaWire, thin] (b1) -- (b2);
  \draw[-{Stealth[length=1.6pt,width=1.2pt]}, dnaWire, thin] (b2) -- (b3);
  \draw[-{Stealth[length=1.6pt,width=1.2pt]}, dnaWire, thin] (b3) -- (b4);
  \draw[-{Stealth[length=1.6pt,width=1.2pt]}, dnaWire, thin] (b4) -- (b5);
  \draw[-{Stealth[length=1.6pt,width=1.2pt]}, dnaWire, thin] (b5) -- (b6);
  \draw[-{Stealth[length=1.6pt,width=1.2pt]}, dnaWire, thin] (b6) -- (b7);
  \draw[-{Stealth[length=1.6pt,width=1.2pt]}, dnaWire, thin] (b7) -- (b8);
  \draw[-{Stealth[length=1.6pt,width=1.2pt]}, dnaWire, thin] (b8) -- (b9);
  \draw[-{Stealth[length=1.6pt,width=1.2pt]}, dnaWire, thin] (b9) -- (b10);
  \draw[-{Stealth[length=1.6pt,width=1.2pt]}, dnaWire, thin] (b10) -- (b11);
  \draw[-{Stealth[length=1.6pt,width=1.2pt]}, dnaWire, thin] (b12) -- (b13);
  \draw[-{Stealth[length=1.6pt,width=1.2pt]}, dnaWire, thin] (b13) -- (b14);
  \draw[-{Stealth[length=1.6pt,width=1.2pt]}, dnaWire, thin] (b14) -- (b15);
  \draw[-{Stealth[length=1.6pt,width=1.2pt]}, dnaWire, thin] (b15) -- (b16);
  \draw[-{Stealth[length=1.6pt,width=1.2pt]}, dnaWire, thin] (b16) -- (b17);
  \draw[-{Stealth[length=1.6pt,width=1.2pt]}, dnaWire, thin] (b17) -- (b18);
  \draw[-{Stealth[length=1.6pt,width=1.2pt]}, dnaWire, thin] (b18) -- (b19);
  \draw[-{Stealth[length=1.6pt,width=1.2pt]}, dnaWire, thin] (b19) -- (b20);
  \draw[-{Stealth[length=1.6pt,width=1.2pt]}, dnaWire, thin] (b20) -- (b21);
  \draw[-{Stealth[length=1.6pt,width=1.2pt]}, dnaWire, thin] (b21) -- (b22);
  \draw[-{Stealth[length=1.6pt,width=1.2pt]}, dnaWire, thin] (b22) -- (b23);
  \draw[-{Stealth[length=1.6pt,width=1.2pt]}, dnaWire, thin] (b23) -- (b24);
  \draw[-{Stealth[length=1.6pt,width=1.2pt]}, dnaWire, thin] (b24) -- (b25);
  \draw[-{Stealth[length=1.6pt,width=1.2pt]}, dnaWire, thin] (b25) -- (b26);
  \draw[-{Stealth[length=1.6pt,width=1.2pt]}, dnaWire, thin] (b26) .. controls (-1.570,-1.904) and (-1.570,-1.904) .. (b27);
  \draw[-{Stealth[length=1.6pt,width=1.2pt]}, dnaWire, thin] (b27) -- (b28);
  \draw[-{Stealth[length=1.6pt,width=1.2pt]}, dnaWire, thin] (b28) -- (b29);
  \draw[-{Stealth[length=1.6pt,width=1.2pt]}, dnaWire, thin] (b29) -- (b30);
  \draw[-{Stealth[length=1.6pt,width=1.2pt]}, dnaWire, thin] (b30) -- (b31);
  \draw[-{Stealth[length=1.6pt,width=1.2pt]}, dnaWire, thin] (b31) -- (b32);
  \draw[-{Stealth[length=1.6pt,width=1.2pt]}, dnaWire, thin] (b32) -- (b33);
  \draw[-{Stealth[length=1.6pt,width=1.2pt]}, dnaWire, thin] (b33) -- (b34);
  \draw[-{Stealth[length=1.6pt,width=1.2pt]}, dnaWire, thin] (b34) -- (b35);
  \draw[-{Stealth[length=1.6pt,width=1.2pt]}, dnaWire, thin] (b35) -- (b36);
  \draw[-{Stealth[length=1.6pt,width=1.2pt]}, dnaWire, thin] (b36) -- (b37);
  \draw[-{Stealth[length=1.6pt,width=1.2pt]}, dnaWire, thin] (b37) -- (b38);
  \draw[-{Stealth[length=1.6pt,width=1.2pt]}, dnaWire, thin] (b38) -- (b39);
  \draw[-{Stealth[length=1.6pt,width=1.2pt]}, dnaWire, thin] (b39) -- (b40);
  \draw[-{Stealth[length=1.6pt,width=1.2pt]}, dnaWire, thin] (b40) -- (b41);
  \draw[-{Stealth[length=1.6pt,width=1.2pt]}, dnaWire, thin] (b41) -- (b42);
  \draw[-{Stealth[length=1.6pt,width=1.2pt]}, dnaWire, thin] (b42) -- (b43);
  \draw[-{Stealth[length=1.6pt,width=1.2pt]}, dnaWire, thin] (b43) -- (b44);
  \draw[-{Stealth[length=1.6pt,width=1.2pt]}, dnaWire, thin] (b44) -- (b45);
  \draw[-{Stealth[length=1.6pt,width=1.2pt]}, dnaWire, thin] (b45) -- (b46);
  \draw[-{Stealth[length=1.6pt,width=1.2pt]}, dnaWire, thin] (b46) -- (b47);
  \draw[-{Stealth[length=1.6pt,width=1.2pt]}, dnaWire, thin] (b48) -- (b49);
  \draw[-{Stealth[length=1.6pt,width=1.2pt]}, dnaWire, thin] (b49) -- (b50);
  \draw[-{Stealth[length=1.6pt,width=1.2pt]}, dnaWire, thin] (b50) -- (b51);
  \draw[-{Stealth[length=1.6pt,width=1.2pt]}, dnaWire, thin] (b51) -- (b52);
  \draw[-{Stealth[length=1.6pt,width=1.2pt]}, dnaWire, thin] (b52) -- (b53);
  \draw[-{Stealth[length=1.6pt,width=1.2pt]}, dnaWire, thin] (b53) -- (b54);
  \draw[-{Stealth[length=1.6pt,width=1.2pt]}, dnaWire, thin] (b54) -- (b55);
  \draw[-{Stealth[length=1.6pt,width=1.2pt]}, dnaWire, thin] (b55) -- (b56);
  \draw[-{Stealth[length=1.6pt,width=1.2pt]}, dnaWire, thin] (b56) -- (b57);
  \draw[-{Stealth[length=1.6pt,width=1.2pt]}, dnaWire, thin] (b57) -- (b58);
  \draw[-{Stealth[length=1.6pt,width=1.2pt]}, dnaWire, thin] (b58) -- (b59);
  \draw[dnaAT, thick] (b0) -- (b23);
  \draw[dnaCG, thick] (b1) -- (b22);
  \draw[dnaCG, thick] (b2) -- (b21);
  \draw[dnaAT, thick] (b3) -- (b20);
  \draw[dnaAT, thick] (b4) -- (b19);
  \draw[dnaCG, thick] (b5) -- (b18);
  \draw[dnaAT, thick] (b6) -- (b17);
  \draw[dnaCG, thick] (b7) -- (b16);
  \draw[dnaCG, thick] (b8) -- (b15);
  \draw[dnaAT, thick] (b9) -- (b14);
  \draw[dnaAT, thick] (b10) -- (b13);
  \draw[dnaCG, thick] (b11) -- (b12);
  \draw[dnaAT, thick] (b26) -- (b27);
  \draw[dnaCG, thick] (b28) -- (b32);
  \draw[dnaCG, thick] (b36) -- (b59);
  \draw[dnaAT, thick] (b37) -- (b58);
  \draw[dnaAT, thick] (b38) -- (b57);
  \draw[dnaCG, thick] (b39) -- (b56);
  \draw[dnaCG, thick] (b40) -- (b55);
  \draw[dnaAT, thick] (b41) -- (b54);
  \draw[dnaCG, thick] (b42) -- (b53);
  \draw[dnaAT, thick] (b43) -- (b52);
  \draw[dnaAT, thick] (b44) -- (b51);
  \draw[dnaCG, thick] (b45) -- (b50);
  \draw[dnaCG, thick] (b46) -- (b49);
  \draw[dnaAT, thick] (b47) -- (b48);
  \node[rigid node, font=\small] at ($(b30)+(0.0,-2.2)$) {cats chase mice};
\end{tikzpicture}
\end{minipage}
\end{center}

\begin{remark}
In the straightened view, grammatical reduction---the contraction of adjacent type-antitype pairs---is precisely the zip-and-transfer operation of Section~\ref{subsec:strand} applied to complementary DNA interfaces. Parsing a sentence thus corresponds to a cascade of zip-and-transfer steps, structurally analogous to the strand-displacement cascades used in DNA computing~\cite{Qian2011, Seelig2006}.
\end{remark}

\section{Discussion}\label{sec:remarks}

Several natural variants of $\Ddna$ suggest themselves. Rather than erasing closed loops, one can assign scalar weights $\delta_{AT}$, $\delta_{CG}$ to produce a category enriched over a scalar ring, enabling thermodynamic refinements via nearest-neighbor energy models~\cite{SantaLucia1998}. Dropping the noncrossing constraint would model pseudoknotted structures; categorically, this amounts to adding a braiding, moving from a pivotal to a braided or tortile monoidal category~\cite{Kassel1995}. One could also replace literal sequences by equivalence classes (e.g.\ families satisfying a design relation), so that a type image $F(n)$ denotes a domain rather than a single fixed word.

The short sequences used in the grammar examples of Section~\ref{sec:grammar} are purely illustrative. In practice, physically stable type assignments would require sequences tens of nucleotides long, with meaning stored in stems exceeding the thermal stability threshold~\cite{SantaLucia1998}. Longer complementary interfaces open a richer design space: one can program selective binding profiles by tuning sequences, as exploited by DNA molecular computation~\cite{Seelig2006, Zhang2009, Qian2011, Soloveichik2010} and systematic sequence design tools such as NUPACK~\cite{Zadeh2011}.

The categorical framework complements several existing lines of work. Winfree~\cite{Winfree1998} connects the physical structure of DNA assemblies to the Chomsky hierarchy, showing that tree-like self-assembly generates exactly the context-free languages; our noncrossing diagrams inhabit the same structural regime, though a full formal equivalence remains to be established. Phillips and Cardelli~\cite{Phillips2009} describe a programming language for composable strand-displacement circuits under an explicit no-secondary-structure assumption for strands; $\Ddna$ removes that restriction, so it is strictly more general while retaining an algebraic account of the same compositional operations, with zip-and-transfer as the primitive. Fontana, Schuster, and collaborators~\cite{Fontana1993a, Schuster1994} studied RNA secondary structures as phenotypes under a genotype--phenotype map induced by maximum-bond (equivalently, energy-minimizing) folding; in our setting, that map can itself be read as a computation semantics on structures. The algorithmic-chemistry program and its constructive dynamical extension~\cite{Fontana1992, FontanaBuss1994, FontanaBuss1996} then supply the complementary viewpoint: interacting typed entities compute by composition while constructing new entities. $\Ddna$ realizes this point through duality: terms are secondary structures and, at the same time, morphisms (functions), so zip-and-transfer is simultaneously chemical interaction and functional composition.

We close with three open questions. (1)~Which functors $\mathcal{G} \to \Ddna$ produce linguistically useful semantic distinctions? (2)~For what classes of problems or computations is this model naturally well suited? (3)~What are the thermodynamic limitations and sequence-design constraints of this framework, and can weighted-loop conventions and/or enriched morphisms encode nearest-neighbor free energies through local loop changes under composition while extending to a braided pivotal setting that accommodates pseudoknots?


\section*{Acknowledgements}
This work was first inspired during the author's doctoral studies in the laboratory of Erik Winfree at Caltech and conceived while the author was an Omidyar Postdoctoral Fellow at the Santa Fe Institute.
The author gratefully acknowledges the financial support of the Fontana Lab at the Department of Systems Biology, Harvard Medical School, during the development of this work, and thanks Walter Fontana personally for his encouragement and intellectual generosity.
The author also thanks Martha Lewis and Emily Riehl for useful conversations during the early stages of this project.


\bibliographystyle{abbrvurl}
\bibliography{refs}

@article{Coecke2010,
  author    = {Bob Coecke and Mehrnoosh Sadrzadeh and Stephen Clark},
  title     = {Mathematical Foundations for a Compositional Distributional Model of Meaning},
  journal   = {Linguistic Analysis},
  volume    = {36},
  pages     = {345--384},
  year      = {2010},
  note      = {\href{https://arxiv.org/abs/1003.4394}{arXiv:1003.4394}},

}

@inproceedings{Fontana1992,
  author    = {Walter Fontana},
  title     = {Algorithmic Chemistry: A Model for Functional Self-Organization},
  booktitle = {Artificial Life {II}},
  editor    = {Christopher G. Langton and Charles Taylor and J. Doyne Farmer and Steen Rasmussen},
  publisher = {Addison-Wesley},
  year      = {1992},

}

@article{Fontana1993a,
  author    = {Walter Fontana and Peter F. Stadler and Elena G. Bornberg-Bauer and Thomas Griesmacher and Ivo L. Hofacker and Manfred Tacker and Pedro Tarazona and Erik D. Weinberger and Peter Schuster},
  title     = {{RNA} Folding and Combinatory Landscapes},
  journal   = {Physical Review E},
  volume    = {47},
  pages     = {2083--2099},
  year      = {1993},
  doi      = {10.1103/PhysRevE.47.2083},

}

@article{FontanaBuss1994,
  author    = {Walter Fontana and Leo W. Buss},
  title     = {The Arrival of the Fittest: Toward a Theory of Biological Organization},
  journal   = {Bulletin of Mathematical Biology},
  volume    = {56},
  pages     = {1--64},
  year      = {1994},
  doi      = {10.1016/S0092-8240(05)80205-8},

}

@incollection{FontanaBuss1996,
  author    = {Walter Fontana and Leo W. Buss},
  title     = {The Barrier of Objects: From Dynamical Systems to Bounded Organizations},
  booktitle = {Boundaries and Barriers},
  editor    = {John Casti and Anders Karlqvist},
  pages     = {56--116},
  publisher = {Addison-Wesley},
  year      = {1996},

}

@article{Joyal1991,
  author    = {Andr{\'e} Joyal and Ross Street},
  title     = {The Geometry of Tensor Calculus, {I}},
  journal   = {Advances in Mathematics},
  volume    = {88},
  pages     = {55--112},
  year      = {1991},
  doi      = {10.1016/0001-8708(91)90003-P},

}

@book{Kassel1995,
  author    = {Christian Kassel},
  title     = {Quantum Groups},
  series    = {Graduate Texts in Mathematics},
  volume    = {155},
  publisher = {Springer},
  year      = {1995},
  doi      = {10.1007/978-1-4612-0783-2},

}

@inproceedings{Lambek1999,
  author    = {Joachim Lambek},
  title     = {Type Grammar Revisited},
  booktitle = {Logical Aspects of Computational Linguistics},
  series    = {Lecture Notes in Computer Science},
  volume    = {1582},
  pages     = {1--27},
  publisher = {Springer},
  year      = {1999},
  doi      = {10.1007/3-540-48975-4_1},

}

@article{Phillips2009,
  author    = {Andrew Phillips and Luca Cardelli},
  title     = {A Programming Language for Composable {DNA} Circuits},
  journal   = {Journal of the Royal Society Interface},
  volume    = {6},
  pages     = {S419--S436},
  year      = {2009},
  doi      = {10.1098/rsif.2009.0072.focus},

}

@article{Qian2011,
  author    = {Lulu Qian and Erik Winfree},
  title     = {Scaling Up Digital Circuit Computation with {DNA} Strand Displacement Cascades},
  journal   = {Science},
  volume    = {332},
  pages     = {1196--1201},
  year      = {2011},
  doi      = {10.1126/science.1200520},

}

@article{SantaLucia1998,
  author    = {John {SantaLucia, Jr.}},
  title     = {A Unified View of Polymer, Dumbbell, and Oligonucleotide {DNA} Nearest-Neighbor Thermodynamics},
  journal   = {Proceedings of the National Academy of Sciences},
  volume    = {95},
  pages     = {1460--1465},
  year      = {1998},
  doi      = {10.1073/pnas.95.4.1460},

}

@article{Schuster1994,
  author    = {Peter Schuster and Walter Fontana and Peter F. Stadler and Ivo L. Hofacker},
  title     = {From Sequences to Shapes and Back: A Case Study in {RNA} Secondary Structures},
  journal   = {Proceedings of the Royal Society B},
  volume    = {255},
  pages     = {279--284},
  year      = {1994},
  doi      = {10.1098/rspb.1994.0040},

}

@article{Seelig2006,
  author    = {Georg Seelig and David Soloveichik and David Yu Zhang and Erik Winfree},
  title     = {Enzyme-Free Nucleic Acid Logic Circuits},
  journal   = {Science},
  volume    = {314},
  pages     = {1585--1588},
  year      = {2006},
  doi      = {10.1126/science.1132493},

}

@incollection{Selinger2011,
  author    = {Peter Selinger},
  title     = {A Survey of Graphical Languages for Monoidal Categories},
  booktitle = {New Structures for Physics},
  series    = {Lecture Notes in Physics},
  volume    = {813},
  pages     = {289--355},
  publisher = {Springer},
  year      = {2011},
  doi      = {10.1007/978-3-642-12821-9_4},

}

@article{Soloveichik2010,
  author    = {David Soloveichik and Georg Seelig and Erik Winfree},
  title     = {{DNA} as a Universal Substrate for Chemical Kinetics},
  journal   = {Proceedings of the National Academy of Sciences},
  volume    = {107},
  pages     = {5393--5398},
  year      = {2010},
  doi      = {10.1073/pnas.0909380107},

}

@article{Waterman1978,
  author    = {Michael S. Waterman},
  title     = {Secondary Structure of Single-Stranded Nucleic Acids},
  journal   = {Advances in Mathematics, Supplementary Studies},
  volume    = {1},
  pages     = {167--212},
  year      = {1978},

}

@phdthesis{Winfree1998,
  author    = {Erik Winfree},
  title     = {Algorithmic Self-Assembly of {DNA}},
  school    = {California Institute of Technology},
  year      = {1998},

}

@article{Yurke2000,
  author    = {Bernard Yurke and Andrew J. Turberfield and Allen P. {Mills, Jr.} and Friedrich C. Simmel and Jennifer L. Neumann},
  title     = {A {DNA}-Fuelled Molecular Machine Made of {DNA}},
  journal   = {Nature},
  volume    = {406},
  pages     = {605--608},
  year      = {2000},
  doi      = {10.1038/35017555},

}

@article{Zadeh2011,
  author    = {Joseph N. Zadeh and Conrad D. Steenberg and Justin S. Bois and Brian R. Wolfe and Marshall B. Pierce and Asif R. Khan and Robert M. Dirks and Niles A. Pierce},
  title     = {{NUPACK}: Analysis and Design of Nucleic Acid Systems},
  journal   = {Journal of Computational Chemistry},
  volume    = {32},
  pages     = {170--173},
  year      = {2011},
  doi      = {10.1002/jcc.21596},

}

@article{Zhang2009,
  author    = {David Yu Zhang and Erik Winfree},
  title     = {Control of {DNA} Strand Displacement Kinetics Using Toehold Exchange},
  journal   = {Journal of the American Chemical Society},
  volume    = {131},
  pages     = {17303--17314},
  year      = {2009},
  doi      = {10.1021/ja906987s},

}

@article{Radding1982,
  author    = {Charles M. Radding},
  title     = {Homologous Pairing and Strand Exchange in Genetic Recombination},
  journal   = {Annual Review of Genetics},
  volume    = {16},
  pages     = {405--437},
  year      = {1982},
  doi      = {10.1146/annurev.ge.16.120182.002201},

}


\appendix
\section{Worked toehold-mediated strand displacement}\label{app:tmsd}

\begin{example}[Toehold-mediated strand displacement]\label{ex:tmsd}
Let $t$ be a toehold and $s$ a branch migration domain. Consider
\[
f \colon s \to ts,
\qquad
g \colon ts \to \eps,
\]
where $f$ has through-strands on the $s$ positions (the target toehold positions are initially unmatched) and $g$ is the empty morphism. As a concrete instance, set
\[
t=\texttt{GAA},\qquad s=\texttt{TCTCTC},\qquad ts=\texttt{GAATCTCTC}.
\]
Then $\widehat{f} \colon \eps \to s\dv \otimes ts$ forms the substrate duplex on the $s$ segment, while $\widehat{g} \colon \eps \to (ts)\dv = s\dv t\dv$ is a free invader strand. In folded form, the initial state has three strands
\[
(ts)\dv,\qquad ts,\qquad s\dv,
\]
with $(ts)\dv$ free and $ts$ paired with $s\dv$ via $\coev_{s\dv}$. Contracting $ts \otimes (ts)\dv$ performs toehold binding first and then branch migration along $s$. At the folded-complex level, this transfer yields the duplex $(ts)\dv \otimes ts$ together with released $s\dv$.
\end{example}

\begin{center}
\begin{minipage}{\textwidth}
  \centering
  \textbf{(a) Arc-evaluation diagram}
  \par\smallskip
  \resizebox{0.68\textwidth}{!}{
\begin{tikzpicture}[x=0.72cm, y=0.72cm]
  \begin{scope}[on background layer]
    \fill[dnaInterface] (15.5,-3.0) rectangle (15.5,2.0);
  \end{scope}
  \node[base node] (b1) at (1,0) {G};
  \node[base node] (b2) at (2,0) {A};
  \node[base node] (b3) at (3,0) {G};
  \node[base node] (b4) at (4,0) {A};
  \node[base node] (b5) at (5,0) {G};
  \node[base node] (b6) at (6,0) {A};
  \node[base node] (b7) at (7,0) {G};
  \node[base node] (b8) at (8,0) {A};
  \node[base node] (b9) at (9,0) {A};
  \node[base node] (b10) at (10,0) {T};
  \node[base node] (b11) at (11,0) {C};
  \node[base node] (b12) at (12,0) {T};
  \node[base node] (b13) at (13,0) {C};
  \node[base node] (b14) at (14,0) {T};
  \node[base node] (b15) at (15,0) {C};
  \node[base node] (b16) at (16,0) {G};
  \node[base node] (b17) at (17,0) {A};
  \node[base node] (b18) at (18,0) {G};
  \node[base node] (b19) at (19,0) {A};
  \node[base node] (b20) at (20,0) {G};
  \node[base node] (b21) at (21,0) {A};
  \node[base node] (b22) at (22,0) {T};
  \node[base node] (b23) at (23,0) {T};
  \node[base node] (b24) at (24,0) {C};
  \node[rigid node] at (20.0,-0.65) {\footnotesize$y^\vee$};
  \node[rigid node] at (11.0,-0.65) {\footnotesize$y$};
  \node[rigid node] at (3.5,-0.65) {\footnotesize$x^\vee$};
  \draw[cg arc] (b1.north) .. controls +(0,3.80) and +(0,3.80) .. (b15.north);
  \draw[at arc rev] (b2.north) .. controls +(0,3.15) and +(0,3.15) .. (b14.north);
  \draw[cg arc] (b3.north) .. controls +(0,2.50) and +(0,2.50) .. (b13.north);
  \draw[at arc rev] (b4.north) .. controls +(0,1.85) and +(0,1.85) .. (b12.north);
  \draw[cg arc] (b5.north) .. controls +(0,1.20) and +(0,1.20) .. (b11.north);
  \draw[at arc rev] (b6.north) .. controls +(0,0.55) and +(0,0.55) .. (b10.north);
  \draw[cg arc, densely dashed] (b7.south) .. controls +(0,-5.75) and +(0,-5.75) .. (b24.south);
  \draw[at arc rev, densely dashed] (b8.south) .. controls +(0,-5.10) and +(0,-5.10) .. (b23.south);
  \draw[at arc rev, densely dashed] (b9.south) .. controls +(0,-4.45) and +(0,-4.45) .. (b22.south);
  \draw[at arc, densely dashed] (b10.south) .. controls +(0,-3.80) and +(0,-3.80) .. (b21.south);
  \draw[cg arc rev, densely dashed] (b11.south) .. controls +(0,-3.15) and +(0,-3.15) .. (b20.south);
  \draw[at arc, densely dashed] (b12.south) .. controls +(0,-2.50) and +(0,-2.50) .. (b19.south);
  \draw[cg arc rev, densely dashed] (b13.south) .. controls +(0,-1.85) and +(0,-1.85) .. (b18.south);
  \draw[at arc, densely dashed] (b14.south) .. controls +(0,-1.20) and +(0,-1.20) .. (b17.south);
  \draw[cg arc rev, densely dashed] (b15.south) .. controls +(0,-0.55) and +(0,-0.55) .. (b16.south);
  \node[rigid node] at (8.0,2.2) {\footnotesize$\widehat{f}$};
  \node[rigid node] at (20.0,2.2) {\footnotesize$\widehat{g}$};
  \node[rigid node, rotate=90] at (15.5,1.2) {\footnotesize interface};
  \node[rigid eq] at (12.5,-3.6) {$\longrightarrow$};
  \node[base node] (r1) at (10.0,-5.2) {G};
  \node[base node] (r2) at (11.0,-5.2) {A};
  \node[base node] (r3) at (12.0,-5.2) {G};
  \node[base node] (r4) at (13.0,-5.2) {A};
  \node[base node] (r5) at (14.0,-5.2) {G};
  \node[base node] (r6) at (15.0,-5.2) {A};
  \node[rigid node] at (12.5,-5.85) {\footnotesize$x^\vee$};
\end{tikzpicture}}
\end{minipage}

\vspace{0.9em}

\begin{minipage}{0.32\textwidth}
  \centering
  \textbf{(b) Folded initial state: free $(ts)^\vee$ and duplex $ts|s^\vee$}
  \par\smallskip
  \scalebox{1.0}{
\begin{tikzpicture}[x=0.5cm, y=0.5cm]
  \node[base node, font=\tiny\ttfamily] (b0) at (0.000,0.000) {G};
  \node[base node, font=\tiny\ttfamily] (b1) at (0.650,0.000) {A};
  \node[base node, font=\tiny\ttfamily] (b2) at (1.300,0.000) {G};
  \node[base node, font=\tiny\ttfamily] (b3) at (1.950,0.000) {A};
  \node[base node, font=\tiny\ttfamily] (b4) at (2.600,0.000) {G};
  \node[base node, font=\tiny\ttfamily] (b5) at (3.250,0.000) {A};
  \node[base node, font=\tiny\ttfamily] (b6) at (3.900,0.000) {T};
  \node[base node, font=\tiny\ttfamily] (b7) at (4.550,0.000) {T};
  \node[base node, font=\tiny\ttfamily] (b8) at (5.200,0.000) {C};
  \draw[-{Stealth[length=1.6pt,width=1.2pt]}, dnaWire, thin] (b0) -- (b1);
  \draw[-{Stealth[length=1.6pt,width=1.2pt]}, dnaWire, thin] (b1) -- (b2);
  \draw[-{Stealth[length=1.6pt,width=1.2pt]}, dnaWire, thin] (b2) -- (b3);
  \draw[-{Stealth[length=1.6pt,width=1.2pt]}, dnaWire, thin] (b3) -- (b4);
  \draw[-{Stealth[length=1.6pt,width=1.2pt]}, dnaWire, thin] (b4) -- (b5);
  \draw[-{Stealth[length=1.6pt,width=1.2pt]}, dnaWire, thin] (b5) -- (b6);
  \draw[-{Stealth[length=1.6pt,width=1.2pt]}, dnaWire, thin] (b6) -- (b7);
  \draw[-{Stealth[length=1.6pt,width=1.2pt]}, dnaWire, thin] (b7) -- (b8);
  \node[base node, font=\tiny\ttfamily] (b9) at (6.915,0.475) {G};
  \node[base node, font=\tiny\ttfamily] (b10) at (6.800,-0.165) {A};
  \node[base node, font=\tiny\ttfamily] (b11) at (7.131,-0.724) {A};
  \node[base node, font=\tiny\ttfamily] (b12) at (7.748,-0.930) {T};
  \node[base node, font=\tiny\ttfamily] (b13) at (8.318,-1.241) {C};
  \node[base node, font=\tiny\ttfamily] (b14) at (8.889,-1.553) {T};
  \node[base node, font=\tiny\ttfamily] (b15) at (9.459,-1.864) {C};
  \node[base node, font=\tiny\ttfamily] (b16) at (10.030,-2.176) {T};
  \node[base node, font=\tiny\ttfamily] (b17) at (10.600,-2.488) {C};
  \node[base node, font=\tiny\ttfamily] (b18) at (11.367,-1.084) {G};
  \node[base node, font=\tiny\ttfamily] (b19) at (10.797,-0.772) {A};
  \node[base node, font=\tiny\ttfamily] (b20) at (10.226,-0.460) {G};
  \node[base node, font=\tiny\ttfamily] (b21) at (9.656,-0.149) {A};
  \node[base node, font=\tiny\ttfamily] (b22) at (9.085,0.163) {G};
  \node[base node, font=\tiny\ttfamily] (b23) at (8.515,0.475) {A};
  \draw[-{Stealth[length=1.6pt,width=1.2pt]}, dnaWire, thin] (b9) -- (b10);
  \draw[-{Stealth[length=1.6pt,width=1.2pt]}, dnaWire, thin] (b10) -- (b11);
  \draw[-{Stealth[length=1.6pt,width=1.2pt]}, dnaWire, thin] (b11) -- (b12);
  \draw[-{Stealth[length=1.6pt,width=1.2pt]}, dnaWire, thin] (b12) -- (b13);
  \draw[-{Stealth[length=1.6pt,width=1.2pt]}, dnaWire, thin] (b13) -- (b14);
  \draw[-{Stealth[length=1.6pt,width=1.2pt]}, dnaWire, thin] (b14) -- (b15);
  \draw[-{Stealth[length=1.6pt,width=1.2pt]}, dnaWire, thin] (b15) -- (b16);
  \draw[-{Stealth[length=1.6pt,width=1.2pt]}, dnaWire, thin] (b16) -- (b17);
  \draw[-{Stealth[length=1.6pt,width=1.2pt]}, dnaWire, thin] (b17) .. controls (11.411,-2.019) and (11.411,-2.019) .. (b18);
  \draw[-{Stealth[length=1.6pt,width=1.2pt]}, dnaWire, thin] (b18) -- (b19);
  \draw[-{Stealth[length=1.6pt,width=1.2pt]}, dnaWire, thin] (b19) -- (b20);
  \draw[-{Stealth[length=1.6pt,width=1.2pt]}, dnaWire, thin] (b20) -- (b21);
  \draw[-{Stealth[length=1.6pt,width=1.2pt]}, dnaWire, thin] (b21) -- (b22);
  \draw[-{Stealth[length=1.6pt,width=1.2pt]}, dnaWire, thin] (b22) -- (b23);
  \draw[dnaAT, thick] (b12) -- (b23);
  \draw[dnaCG, thick] (b13) -- (b22);
  \draw[dnaAT, thick] (b14) -- (b21);
  \draw[dnaCG, thick] (b15) -- (b20);
  \draw[dnaAT, thick] (b16) -- (b19);
  \draw[dnaCG, thick] (b17) -- (b18);
\end{tikzpicture}}
\end{minipage}\hfill
\begin{minipage}{0.32\textwidth}
  \centering
  \textbf{(c) Midway toehold binding and branch migration}
  \par\smallskip
  \scalebox{1.0}{
\begin{tikzpicture}[x=0.5cm, y=0.5cm]
  \node[base node, font=\tiny\ttfamily] (b0) at (-0.800,1.308) {G};
  \node[base node, font=\tiny\ttfamily] (b1) at (-1.270,0.859) {A};
  \node[base node, font=\tiny\ttfamily] (b2) at (-1.512,0.256) {G};
  \node[base node, font=\tiny\ttfamily] (b3) at (-1.482,-0.393) {A};
  \node[base node, font=\tiny\ttfamily] (b4) at (-1.186,-0.972) {G};
  \node[base node, font=\tiny\ttfamily] (b5) at (-1.400,-1.586) {A};
  \node[base node, font=\tiny\ttfamily] (b6) at (-1.614,-2.200) {T};
  \node[base node, font=\tiny\ttfamily] (b7) at (-1.828,-2.813) {T};
  \node[base node, font=\tiny\ttfamily] (b8) at (-2.041,-3.427) {C};
  \node[base node, font=\tiny\ttfamily] (b9) at (-0.531,-3.954) {G};
  \node[base node, font=\tiny\ttfamily] (b10) at (-0.317,-3.340) {A};
  \node[base node, font=\tiny\ttfamily] (b11) at (-0.103,-2.726) {A};
  \node[base node, font=\tiny\ttfamily] (b12) at (0.111,-2.112) {T};
  \node[base node, font=\tiny\ttfamily] (b13) at (0.325,-1.499) {C};
  \node[base node, font=\tiny\ttfamily] (b14) at (0.917,-1.229) {T};
  \node[base node, font=\tiny\ttfamily] (b15) at (1.520,-1.471) {C};
  \node[base node, font=\tiny\ttfamily] (b16) at (2.123,-1.713) {T};
  \node[base node, font=\tiny\ttfamily] (b17) at (2.727,-1.955) {C};
  \node[base node, font=\tiny\ttfamily] (b18) at (3.322,-0.470) {G};
  \node[base node, font=\tiny\ttfamily] (b19) at (2.719,-0.228) {A};
  \node[base node, font=\tiny\ttfamily] (b20) at (2.115,0.014) {G};
  \node[base node, font=\tiny\ttfamily] (b21) at (1.512,0.256) {A};
  \node[base node, font=\tiny\ttfamily] (b22) at (1.270,0.859) {G};
  \node[base node, font=\tiny\ttfamily] (b23) at (0.800,1.308) {A};
  \draw[-{Stealth[length=1.6pt,width=1.2pt]}, dnaWire, thin] (b0) -- (b1);
  \draw[-{Stealth[length=1.6pt,width=1.2pt]}, dnaWire, thin] (b1) -- (b2);
  \draw[-{Stealth[length=1.6pt,width=1.2pt]}, dnaWire, thin] (b2) -- (b3);
  \draw[-{Stealth[length=1.6pt,width=1.2pt]}, dnaWire, thin] (b3) -- (b4);
  \draw[-{Stealth[length=1.6pt,width=1.2pt]}, dnaWire, thin] (b4) -- (b5);
  \draw[-{Stealth[length=1.6pt,width=1.2pt]}, dnaWire, thin] (b5) -- (b6);
  \draw[-{Stealth[length=1.6pt,width=1.2pt]}, dnaWire, thin] (b6) -- (b7);
  \draw[-{Stealth[length=1.6pt,width=1.2pt]}, dnaWire, thin] (b7) -- (b8);
  \draw[-{Stealth[length=1.6pt,width=1.2pt]}, dnaWire, thin] (b9) -- (b10);
  \draw[-{Stealth[length=1.6pt,width=1.2pt]}, dnaWire, thin] (b10) -- (b11);
  \draw[-{Stealth[length=1.6pt,width=1.2pt]}, dnaWire, thin] (b11) -- (b12);
  \draw[-{Stealth[length=1.6pt,width=1.2pt]}, dnaWire, thin] (b12) -- (b13);
  \draw[-{Stealth[length=1.6pt,width=1.2pt]}, dnaWire, thin] (b13) -- (b14);
  \draw[-{Stealth[length=1.6pt,width=1.2pt]}, dnaWire, thin] (b14) -- (b15);
  \draw[-{Stealth[length=1.6pt,width=1.2pt]}, dnaWire, thin] (b15) -- (b16);
  \draw[-{Stealth[length=1.6pt,width=1.2pt]}, dnaWire, thin] (b16) -- (b17);
  \draw[-{Stealth[length=1.6pt,width=1.2pt]}, dnaWire, thin] (b18) -- (b19);
  \draw[-{Stealth[length=1.6pt,width=1.2pt]}, dnaWire, thin] (b19) -- (b20);
  \draw[-{Stealth[length=1.6pt,width=1.2pt]}, dnaWire, thin] (b20) -- (b21);
  \draw[-{Stealth[length=1.6pt,width=1.2pt]}, dnaWire, thin] (b21) -- (b22);
  \draw[-{Stealth[length=1.6pt,width=1.2pt]}, dnaWire, thin] (b22) -- (b23);
  \draw[dnaCG, thick] (b4) -- (b13);
  \draw[dnaAT, thick] (b5) -- (b12);
  \draw[dnaAT, thick] (b6) -- (b11);
  \draw[dnaAT, thick] (b7) -- (b10);
  \draw[dnaCG, thick] (b8) -- (b9);
  \draw[dnaAT, thick] (b14) -- (b21);
  \draw[dnaCG, thick] (b15) -- (b20);
  \draw[dnaAT, thick] (b16) -- (b19);
  \draw[dnaCG, thick] (b17) -- (b18);
\end{tikzpicture}}
\end{minipage}\hfill
\begin{minipage}{0.32\textwidth}
  \centering
  \textbf{(d) Folded transfer result: duplex $(ts)^\vee|ts$ and free $s^\vee$}
  \par\smallskip
  \scalebox{1.0}{
\begin{tikzpicture}[x=0.5cm, y=0.5cm]
  \node[base node, font=\tiny\ttfamily] (b0) at (0.000,-0.000) {G};
  \node[base node, font=\tiny\ttfamily] (b1) at (0.000,-0.650) {A};
  \node[base node, font=\tiny\ttfamily] (b2) at (0.000,-1.300) {G};
  \node[base node, font=\tiny\ttfamily] (b3) at (0.000,-1.950) {A};
  \node[base node, font=\tiny\ttfamily] (b4) at (0.000,-2.600) {G};
  \node[base node, font=\tiny\ttfamily] (b5) at (0.000,-3.250) {A};
  \node[base node, font=\tiny\ttfamily] (b6) at (0.000,-3.900) {T};
  \node[base node, font=\tiny\ttfamily] (b7) at (0.000,-4.550) {T};
  \node[base node, font=\tiny\ttfamily] (b8) at (0.000,-5.200) {C};
  \node[base node, font=\tiny\ttfamily] (b9) at (1.600,-5.200) {G};
  \node[base node, font=\tiny\ttfamily] (b10) at (1.600,-4.550) {A};
  \node[base node, font=\tiny\ttfamily] (b11) at (1.600,-3.900) {A};
  \node[base node, font=\tiny\ttfamily] (b12) at (1.600,-3.250) {T};
  \node[base node, font=\tiny\ttfamily] (b13) at (1.600,-2.600) {C};
  \node[base node, font=\tiny\ttfamily] (b14) at (1.600,-1.950) {T};
  \node[base node, font=\tiny\ttfamily] (b15) at (1.600,-1.300) {C};
  \node[base node, font=\tiny\ttfamily] (b16) at (1.600,-0.650) {T};
  \node[base node, font=\tiny\ttfamily] (b17) at (1.600,0.000) {C};
  \draw[-{Stealth[length=1.6pt,width=1.2pt]}, dnaWire, thin] (b0) -- (b1);
  \draw[-{Stealth[length=1.6pt,width=1.2pt]}, dnaWire, thin] (b1) -- (b2);
  \draw[-{Stealth[length=1.6pt,width=1.2pt]}, dnaWire, thin] (b2) -- (b3);
  \draw[-{Stealth[length=1.6pt,width=1.2pt]}, dnaWire, thin] (b3) -- (b4);
  \draw[-{Stealth[length=1.6pt,width=1.2pt]}, dnaWire, thin] (b4) -- (b5);
  \draw[-{Stealth[length=1.6pt,width=1.2pt]}, dnaWire, thin] (b5) -- (b6);
  \draw[-{Stealth[length=1.6pt,width=1.2pt]}, dnaWire, thin] (b6) -- (b7);
  \draw[-{Stealth[length=1.6pt,width=1.2pt]}, dnaWire, thin] (b7) -- (b8);
  \draw[-{Stealth[length=1.6pt,width=1.2pt]}, dnaWire, thin] (b8) .. controls (0.800,-5.688) and (0.800,-5.688) .. (b9);
  \draw[-{Stealth[length=1.6pt,width=1.2pt]}, dnaWire, thin] (b9) -- (b10);
  \draw[-{Stealth[length=1.6pt,width=1.2pt]}, dnaWire, thin] (b10) -- (b11);
  \draw[-{Stealth[length=1.6pt,width=1.2pt]}, dnaWire, thin] (b11) -- (b12);
  \draw[-{Stealth[length=1.6pt,width=1.2pt]}, dnaWire, thin] (b12) -- (b13);
  \draw[-{Stealth[length=1.6pt,width=1.2pt]}, dnaWire, thin] (b13) -- (b14);
  \draw[-{Stealth[length=1.6pt,width=1.2pt]}, dnaWire, thin] (b14) -- (b15);
  \draw[-{Stealth[length=1.6pt,width=1.2pt]}, dnaWire, thin] (b15) -- (b16);
  \draw[-{Stealth[length=1.6pt,width=1.2pt]}, dnaWire, thin] (b16) -- (b17);
  \draw[dnaCG, thick] (b0) -- (b17);
  \draw[dnaAT, thick] (b1) -- (b16);
  \draw[dnaCG, thick] (b2) -- (b15);
  \draw[dnaAT, thick] (b3) -- (b14);
  \draw[dnaCG, thick] (b4) -- (b13);
  \draw[dnaAT, thick] (b5) -- (b12);
  \draw[dnaAT, thick] (b6) -- (b11);
  \draw[dnaAT, thick] (b7) -- (b10);
  \draw[dnaCG, thick] (b8) -- (b9);
  \node[base node, font=\tiny\ttfamily] (b18) at (3.200,0.000) {G};
  \node[base node, font=\tiny\ttfamily] (b19) at (3.850,0.000) {A};
  \node[base node, font=\tiny\ttfamily] (b20) at (4.500,0.000) {G};
  \node[base node, font=\tiny\ttfamily] (b21) at (5.150,0.000) {A};
  \node[base node, font=\tiny\ttfamily] (b22) at (5.800,0.000) {G};
  \node[base node, font=\tiny\ttfamily] (b23) at (6.450,0.000) {A};
  \draw[-{Stealth[length=1.6pt,width=1.2pt]}, dnaWire, thin] (b18) -- (b19);
  \draw[-{Stealth[length=1.6pt,width=1.2pt]}, dnaWire, thin] (b19) -- (b20);
  \draw[-{Stealth[length=1.6pt,width=1.2pt]}, dnaWire, thin] (b20) -- (b21);
  \draw[-{Stealth[length=1.6pt,width=1.2pt]}, dnaWire, thin] (b21) -- (b22);
  \draw[-{Stealth[length=1.6pt,width=1.2pt]}, dnaWire, thin] (b22) -- (b23);
\end{tikzpicture}}
\end{minipage}
\end{center}

\end{document}